\documentclass{amsart}

\usepackage{amsfonts,amsthm,amsmath,amssymb,latexsym}
\usepackage{graphicx,color}
\usepackage[all]{xy}
\usepackage{import}

\graphicspath{{./Users/dragomir/Documents/Work/Projects/thurstons_bdry_2/liouville-uniform-weak*-bdry/}}

\begin{document}


\author{Dragomir \v Sari\' c}
\thanks{This research is partially supported by National Science Foundation grant DMS 1102440.}

\address{Department of Mathematics, Queens College of CUNY,
65-30 Kissena Blvd., Flushing, NY 11367}
\email{Dragomir.Saric@qc.cuny.edu}

\address{Mathematics PhD. Program, The CUNY Graduate Center, 365 Fifth Avenue, New York, NY 10016-4309}

\theoremstyle{definition}

 \newtheorem{definition}{Definition}[section]
 \newtheorem{remark}[definition]{Remark}
 \newtheorem{example}[definition]{Example}

\newtheorem*{notation}{Notation}

\theoremstyle{plain}

 \newtheorem{proposition}[definition]{Proposition}
 \newtheorem{theorem}[definition]{Theorem}
 \newtheorem{corollary}[definition]{Corollary}
 \newtheorem{lemma}[definition]{Lemma}

\def\H{{\mathbb H}}
\def\F{{\mathcal F}}
\def\R{{\mathbb R}}
\def\Q{{\mathbb Q}}
\def\Z{{\mathbb Z}}
\def\E{{\mathcal E}}
\def\N{{\mathbb N}}
\def\X{{\mathcal X}}
\def\Y{{\mathcal Y}}
\def\C{{\mathbb C}}
\def\D{{\mathbb D}}
\def\G{{\mathcal G}}

\title[Thurston's boundary]{Thurston's boundary to infinite-dimensional Teichm\"uller spaces: geodesic currents}

\subjclass{}

\keywords{}
\date{\today}

\maketitle

\begin{abstract}
Let $X_0$ be a complete borderless infinite area hyperbolic surface. We introduce  Thurston's 
boundary to the Teichm\"uller space $T(X_0)$ of the surface $X_0$ using Liouville (geodesic) currents.  
Thurston's boundary to $T(X_0)$ is identified with the space $PML_{bdd}(X_0)$ of projective bounded measured laminations on $X_0$ which 
naturally extends Thurston's result for closed surfaces. Moreover, the quasiconformal mapping class group $MCG_{qc}(X_0)$ acts continuously on the closure $T(X_0)\cup PML_{bdd}(X_0)$.
\end{abstract}

\section{Introduction}

Fix a complete borderless infinite area hyperbolic surface $X_0$. The space of all quasiconformal deformations of $X_0$ modulo post-compositions by isometries and bounded homotopies is an infinite-dimensional Banach
manifold called the {\it Teichm\"uller space} $T(X_0)$ of $X_0$. 
A hyperbolic metric on a surface $X_0$ induces a natural Borel measure on the space of geodesics of the universal covering $\tilde{X}_0$ called the {\it Liouville current}. 
We describe limiting
behavior 
of the quasiconformal deformations of $X_0$ when dilatations of quasiconfomal 
maps increase without a bound by taking the projective limits of corresponding Liouville currents. Thurston \cite{Th1}, \cite{FLP} 
used the length spectrum to compactify the Teichm\"uller space of a closed surface of genus at least two by 
adding to it the space of projective measured laminations of the surface. Bonahon \cite{Bo} 
used Liouville currents to embed 
the Teichm\"uller space of a closed surface of genus at least two into the space of geodesic currents and give an alternative description of Thurston's boundary to the Teichm\"uller space of a closed surface of genus at least two. We use Bonahon's setup in our construction of Thurston's boundary to infinite dimensional Teichm\"uller spaces.

The Teichm\"uller space $T(X_0)$ of an infinite area hyperbolic surface $X_0$ is an infinite dimensional non-separable Banach manifold. In order to make the map from $T(X_0)$ into the space of geodesic currents of $X_0$ an embedding for the Teichm\"uller metric, some care is needed when defining a topology on the space of geodesic currents of $X_0$.
In \cite{Sar}, H\"older topology on the space of geodesic currents of an infinite area hyperbolic surface $X_0$ is introduced in order to give a natural definition of Thurston's boundary to the Teichm\"uller space $T(X_0)$ of an infinite area hyperbolic surface $X_0$. Thurston's boundary is identified with the space $PML_{bdd}(X_0)$ of projective bounded measured laminations on $X$ analogous to the case of closed surfaces (cf. \cite{Sar}). 

H\"older topology on the space of geodesic currents is given by a family of $\nu$-norms for  H\"older exponents $0<\nu\leq 1$ (cf. \cite{Sar}). This is somewhat complicated description of a topology that could prevent further applications of Thurston's boundary.
Our main contribution is an improvement in the choice of the topology on the space of geodesic currents of $X_0$. 
Namely, we adopt the uniform weak* topology (cf. \cite{MS}) to the space of geodesic 
currents and prove that Thurston's boundary to $T(X_0)$ is identified with 
$PML_{bdd}(X_0)$ as before (cf. \cite{Sar}).

\vskip .2 cm

Let $X_0$ be a complete, borderless hyperbolic surface of (possibly) infinite area (e.g. the hyperbolic plane $\mathbb{H}$, the complement of a Cantor set in the Riemann sphere, a topologically finite hyperbolic surface with funnel ends, an infinite genus surface). The hyperbolic plane $\mathbb{H}$ is identified with the unit disk model and the visual boundary of $\mathbb{H}$ is identified with the unit circle $S^1$. The universal covering $\tilde{X}_0$ is isometrically identified with the hyperbolic plane $\mathbb{H}$ and the isometry continuously extends to an identification of the boundary at infinity $\partial_{\infty}\tilde{X}_0$ with the unit circle $S^1$. The space $G(\tilde{X}_0)$ of oriented geodesics of $\tilde{X}_0$ is identified with $(\partial_{\infty}\tilde{X}_0\times\partial_{\infty}\tilde{X}_0)-diag\equiv (S^1\times S^1)-diag$ by assigning to each geodesic the pair of its endpoints, where $diag$ is the diagonal of $S^1\times S^1$. 

The set $[a,b]\times [c,d]\subset (S^1\times S^1)-diag$ is called a {\it box of geodesics}, where $[a,b],[c,d]\subset S^1$ are disjoint closed arcs. The {\it Liouville measure} of the box of geodesic  $[a,b]\times [c,d]$ is (cf. \cite{Bo})
$$
L([a,b]\times [c,d])=\log \frac{(a-c)(b-d)}{(a-d)(b-c)}.
$$
If $A\subset (S^1\times S^1)-diag$ is a Borel set, then the Liouville measure of $A$ is given by
$$
L(A)=\int_A\frac{|dx|\cdot |dy|}{|x-y|^2}.
$$
The identification of $G(\tilde{X}_0)$ with $(S^1\times S^1)-diag$ induces a full support, $\pi_1(X_0)$-invariant Borel measure on $G(\tilde{X}_0)$ via the pull-back of the Liouville measure on $(S^1\times S^1)-diag$. We remark that $X_0$ is required to be borderless and complete since the Liouville measure is naturally defined on $S^1\times S^1-diag$.

Two different hyperbolic metrics on $X_0$ induce different identifications of $G(\tilde{X}_0)$ and $(S^1\times S^1)-diag$ which in turn induce different measures on the space of geodesics $G(\tilde{X}_0)$ via pull-backs of the Liouville measure. 
Denote by $\mathcal{M}(G(\tilde{X}_0))$ the space of all positive Borel measures (called {\it geodesic currents}) on $G(\tilde{X}_0)$. The {\it Liouville map} 
$$
\mathcal{L}:T(X_0)\to \mathcal{M}(G(\tilde{X}_0))
$$
is defined by assigning to each marked hyperbolic metric the pull-back of the Liouville measure under the identification of $\tilde{X}_0$ and $\mathbb{H}^2$ induced by the hyperbolic metric (cf. Bonahon \cite{Bo}).

When $X_0$ is a finite closed surface of genus at least two, Bonahon \cite{Bo} proved that the Liouville map is a homeomorphism onto its image when $\mathcal{M}(G(\tilde{X}_0))$ is equipped with the weak* topology. Moreover, the projectivization $P(\mathcal{L}(T(X_0)))$  of the image $\mathcal{L}(T(X_0))$ under the Liouville map remains a homeomorphism onto its image in the space of projective geodesic currents $P(\mathcal{M}(G(\tilde{X}_0)))$. Bonahon \cite{Bo} proved that the boundary of $P(\mathcal{L}(T(X_0)))$
inside  $P(\mathcal{M}(G(\tilde{X}_0)))$ consists of projective measured laminations $PML(X_0)$ of the closed surface $X_0$ thus giving an alternative description of Thurston's boundary to $T(X_0)$.

From now on, we assume that $X_0$ is a hyperbolic surface of infinite area.  A positive Borel measure $m$ on $G(\tilde{X}_0)$, called a geodesic current, is said to be {\it bounded}
if 
$$
\sup_{[a,b]\times [c,d]}m([a,b]\times [c,d])<\infty
$$
where the supremum is over all boxes of geodesics $[a,b]\times [c,d]$ with $L([a,b]\times [c,d])=\log 2$. 
Denote by $\mathcal{M}(G(\tilde{X}_0))$ the space of bounded geodesic currents on $G(\tilde{X}_0)$. The Liouville map $\mathcal{L}:T(X_0)\to \mathcal{M}(G(\tilde{X}_0))$ is injective. If $\mathcal{M}(G(\tilde{X}_0))$ is equipped with the weak* topology then the Liouville map is not a homemorphism onto its image. In \cite{Sar},  a new topology on $\mathcal{M}(G(\tilde{X}_0))$ is introduced by embedding $\mathcal{M}(G(\tilde{X}_0))$ into the space of H\"older distributions on $G(\tilde{X}_0)$ satisfying certain boundedness conditions. The Liouville map is an analytic homeomorphism onto its image in the space of H\"older distributions (cf. Otal \cite{Ot}, and also \cite{Sar3}).

The H\"older topology on $\mathcal{M}(G(\tilde{X}_0))$ is used to introduce Thurston's boundary to the Teichm\"uller space $T(X_0)$ when $X_0$ is a hyperbolic surface of infinite area (cf. \cite{Sar}). It turns out that Thurston's boundary to $T(X_0)$ is the space of all projective bounded measured laminations $PML_{bdd}(X_0)$ of $X_0$ analogous to the case of closed surfaces. Unlike for closed surfaces, Thurston's bordification $T(X_0)\cup PML_{bdd}(X_0)$ is not compact, in fact it is not even locally compact. 

The H\"older topology on $\mathcal{M}(G(\tilde{X}_0))$ is complicated for applications. The purpose of this paper is to give a simpler topology on $\mathcal{M}(G(\tilde{X}_0))$ while obtaining same Thurston's boundary to $T(X_0)$.  The topology on $\mathcal{M}(G(\tilde{X}_0))$ that we use is called the {\it uniform weak* topology} and it is first introduced on the space $ML_{bdd}(\tilde{X}_0)$ in \cite{MS} for the purposes of studying the relationship between the earthquake measures and hyperbolic structures obtained by the corresponding earthquakes. We somewhat simplify the definition of the uniform weak* topology from \cite{MS}.

A sequence of measures $m_k\in \mathcal{M}(G(\tilde{X}_0))$ converges to $m\in \mathcal{M}(G(\tilde{X}_0))$ as $k\to\infty$ in the {\it uniform weak* topology} if for every continuous function $f:G(\tilde{X}_0)\to \mathbb{R}$ with compact support we have 
$$
\sup_{\gamma\in Isom(\tilde{X}_0)}\Big{|}\int_{G(\tilde{X}_0)}fd[\gamma^{*}(m_k-m)]\Big{|}  \to 0
$$
as $k\to\infty$, where the supremum is over all isometries $\gamma$ of $\tilde{X}_0=\mathbb{H}$.
In other words, all pull-backs of $m_k-m$ by isometries must converge at the same speed to zero when integrated against a continuous function with compact support. The ``uniformity'' comes from the fact that we consider pull-backs over all isometries in the supremum. We obtain

\vskip .2 cm

\noindent {\bf Theorem 1. } {\it Let $X_0$ be a complete hyperbolic surface without border with possibly infinite area. Then the Liouville map
$$
\mathcal{L}:T(X_0)\to \mathcal{M}(G(\tilde{X}_0))
$$
is a homeomorphism onto its image when $\mathcal{M}(G(\tilde{X}_0))$ is equipped with the uniform weak* topology. The image $\mathcal{L}(T(X_0))$ is closed and unbounded in $\mathcal{M}(G(\tilde{X}_0))$.

The projectivization
$$
P\mathcal{L}:T(X_0)\to P(\mathcal{M}(\tilde{X}_0))
$$
of the Liouville map is a homeomorphisms and the image $P(\mathcal{L}(T(X_0)))$ is not closed in 
$P(\mathcal{M}(\tilde{X}_0))$. The boundary of $P(\mathcal{L}(T(X_0)))$ is the space $PML_{bdd}(X_0)$ of projective bounded measured laminations- Thurston's boundary to $T(X_0)$.
} 

\vskip .2 cm

\noindent {\bf Remark.} When $X_0$ is a closed surface of genus at least two, then the weak* topology coincides with the uniform weak* topology on the space of geodesic currents of $X_0$. The reason for this is that geodesic currents are invariant under the action of $\pi_1(X_0)$ which is a cocompact Fuchsian group.

\vskip .2 cm

In the course of proving Theorem 1 we establish

\vskip .2 cm

\noindent {\bf Theorem 2. } {\it Let $\beta\in ML_{bdd}(X_0)$ and let $t\mapsto E^{t\beta}|_{S^1}$ for $t>0$ be an earthquake path in $T(X_0)$ with the earthquake measure $t\beta$. Then
$$
\frac{1}{t}(E^{t\beta}|_{S^1})^{*}(L)\to\beta
$$
as $t\to\infty$, where the convergence is in the uniform weak* topology.} 

\vskip .2 cm

The {\it quasiconformal mapping class group} $MCG_{qc}(X_0)$ of a complete borderless infinite area hyperbolic surface $X_0$ consists of all quasiconformal maps $g:X_0\to X_0$ up to bounded homotopy (cf. \cite{GL}). The natural action of $MCG_{qc}(X_0)$ on $T(X_0)$ is continuous in the Teichm\"uller metric. We prove

\vskip .2 cm

\noindent {\bf Theorem 3. } {\it Let $X_0$ be a complete hyperbolic surface without border with possibly infinite area. The action of $MCG_{qc}(X_0)$ on $T(X_0)$ extends to a continuous action on Thurston's bordification $T(X_0)\cup PML_{bdd}(X_0)$.
}

\vskip .2 cm

\noindent {\it Acknowledgements.} Theorem 2 did not appear in the first version of this paper. We thank anonymous referee for pointing out this to us.

\section{Teichm\"uller spaces of geometrically infinite hyperbolic surfaces}

Let $X_0$ be a complete hyperbolic surface without boundary whose area is infinite. The universal covering $\tilde{X}_0$ of the surface $X_0$ is isometrically identified with the hyperbolic plane $\mathbb{H}$. The boundary at infinity $\partial_{\infty}\tilde{X}_0$ is identified with the unit circle $S^1$.

The {\it Teichm\"uller space} $T(X_0)$ of the surface $X_0$ is the space of equivalence classes of all quasiconformal maps $f:X_0\to X$, where $X$ is an arbitrary complete hyperbolic surface modulo an equivalence relation. Two quasiconformal maps $f_1:X_0\to X_1$ and $f_2:X_0\to X_2$ are {\it equivalent} if there exists an isometry $I:X_1\to X_2$ such that $f_2^{-1}\circ I\circ f_1$ is homotopic to the identity under a bounded homotopy. Denote by $[f]\in T(X_0)$ the equivalence class of a quasiconformal map $f:X_0\to X$.

The {\it Teichm\"uller distance} on $T(X_0)$ is defined by
$$
d_{T}([f_1],[f_2])=\frac{1}{2}\log \inf_{g\simeq f_2\circ f_1^{-1}}K(g)
$$
where the infimum is taken over all quasiconformal maps $g$ homotopic to $f_2\circ f_1^{-1}$ and $K(g)$ is the quasiconformal constant of $g$. The {\it Teichm\"uller topology} on $T(X_0)$ is the topology induced by the Teichm\"uller distance.

Let $f:X_0\to X$ be a quasiconformal map. Denote by $\tilde{f}:\mathbb{H}\to\mathbb{H}$ a lift of $f$ to the universal covering. Then $\tilde{f}:\mathbb{H}\to\mathbb{H}$ extends by continuity to a quasisymmetric map $h:S^1\to S^1$ that conjugates the covering group of $X_0$ onto the covering group of $X$. We normalize $h$ to fix $1$, $i$ and $-1$ by post-composing it with an isometry of $\mathbb{H}$, if necessary.

Recall that $h:S^1\to S^1$ is a {\it quasisymmetric map} if it is an orientation 
preserving homeomorphism and there exists $M\geq 1$ such that
$$
\frac{1}{M}\leq \Big{|}\frac{h(e^{i(x+t)})-h(e^{ix})}{h(e^{ix})-h(e^{i(x-t)})}\Big{|}
\leq M
$$ 
for all $x,t\in\mathbb{R}$. 

The Teichm\"uller space $T(X_0)$ is in a one to one correspondence with the space of quasisymmetric maps of $S^1$ that fix $1$, $i$ and $-1$, and that conjugate the covering group of $X_0$ onto a subgroup of the isometry group of $\mathbb{H}$. From this point on, we consider the Teichm\"uller space $T(X_0)$ to be the space of normalized quasisymmetric maps.
A sequence $h_n\in T(X_0)$ {\it converges in the Teichm\"uller topology} to $h\in T(X_0)$ if
$$
\sup_{x,t\in\mathbb{R}}  \Big{|}\frac{h_n\circ h^{-1}(e^{i(x+t)})-h_n\circ h^{-1}(e^{ix})}{h_n\circ h^{-1}(e^{ix})-h_n\circ h^{-1}(e^{i(x-t)})}\Big{|}\to 0
$$
as $n\to\infty$. 

The {\it universal Teichm\"uller space} $T(\mathbb{H})$ is the Teichm\"uller space of the hyperbolic plane $\mathbb{H}$ and it consists of all normalized quasisymmetric maps of $S^1$ without any requirements on conjugating covering groups because $\mathbb{H}$ is simply connected. The universal Teichm\"uller space $T(\mathbb{H})$ contains multiple copies of Teichm\"uller spaces of all hyperbolic surfaces. In what follows, we mainly work with $T(\mathbb{H})$ since all the constructions, arguments and statements remain true under the conjugation requirement.

\section{Measured laminations and earthquakes}

A {geodesic lamination} on a hyperbolic surface $X$ is a closed subset of $X$ that is foliated by mutually non-intersecting, simple, complete geodesics called {\it leaves} of the lamination. A geodesic lamination on $X$ lifts to a geodesic lamination on $\mathbb{H}$ that is invariant under the action  of the covering group of $X$. A {\it stratum} of a geodesic lamination is either a leaf of the lamination or a connected component of the complement. A connected component of the complement of a geodesic lamination in $\mathbb{H}$ is isometric to a possibly infinite sided geodesic polygon whose sides are complete geodesics and possibly arcs on $S^1$. 

A {\it measured lamination} $\mu$ on $X$ is an assignment of a positive Borel measure on each arc transverse to a geodesic lamination $|\mu |$ that is invariant under homotopies relative leaves of $|\mu |$. The geodesic lamination $|\mu |$ is called the {\it support} of $\mu$. A measured lamination on $X$ lifts to a measured lamination on $\mathbb{H}$ that is invariant under the covering group of $X$.

 A {\it left earthquake} $E:X_0\to X$ with support geodesic lamination $\lambda$ is a surjective map that is isometry on each stratum of $\lambda$ such that each stratum is moved to the left relative to any other stratum. 
An earthquake of $X_0$ lifts to an earthquake of $\mathbb{H}$ where the support is the lift of the support on $X_0$ (cf. Thurston \cite{Th1}).

We give a definition of a (left) earthquake $E:\mathbb{H}\to\mathbb{H}$ with support geodesic lamination $\lambda$ on $\mathbb{H}$. A {\it (left) earthquake} $E:\mathbb{H}\to\mathbb{H}$ is a bijection of $\mathbb{H}$ whose restriction to any stratum of $\lambda$ is an isometry of $\mathbb{H}$; if $A$ and $B$ are two strata of $\lambda$ then
$$
(E|_A)^{-1}\circ E|_B 
$$
is a hyperbolic translation whose axis weakly separates $A$ and $B$ that moves $B$ to the left as seen from $A$ (cf. Thurston \cite{Th1}).

An earthquake $E:\mathbb{H}\to\mathbb{H}$ induces a transverse measure $\mu$ to its support $\lambda$ which defines a measured lamination $\mu$ with $|\mu |=\lambda$ (cf. \cite{Th1}). 
An earthquake of $\mathbb{H}$ extends by continuity to a homeomorphism of $S^1$. Thurston's earthquake theorem states that any homeomorphism of $S^1$ can be obtained by continuous extension of a left earthquake (cf. Thurston \cite{Th1}).

Given a measured lamination $\mu$, there exists a map $E^{\mu}:\mathbb{H}\to\mathbb{H}$ whose transverse measure is $\mu$ and that satisfies all properties in the definition of an earthquake of $\mathbb{H}$ except being onto (cf. \cite{Th1}, \cite{GHL}). $E^{\mu}$ is uniquely determined by $\mu$ up to post-composition by an isometry of $\mathbb{H}^2$.

We define {\it Thurston's norm} of a measured lamination $\mu$ as
$$
\|\mu\|_{Th}=\sup_J\mu (J)
$$
where the supremum is over all hyperbolic arcs $J$ of length $1$.

Since we are working with quasisymmetric maps, we consider measured 
laminations whose earthquakes induces quasisymmetric maps of $S^1$. 
An earthquake $E^{\mu}$ extends by continuity to a quasisymmetric map of $S^1$ if and only if $\|\mu
\|_{Th}<\infty$ (cf. \cite{Th1}, \cite{GHL}, \cite{Sa1}, \cite{Sa2}).

Denote by $ML_{bdd}(\mathbb{H})$ the space of all measured laminations on $\mathbb{H}$
with finite Thurston's norm. The above result gives a bijective map
$$
EM:T(\mathbb{H})\to ML_{bdd}(\mathbb{H})
$$ 
defined by 
$$
EM:h\mapsto \mu
$$
where $\mu$ is measured lamination induced by unique earthquake $E:\mathbb{H}\to\mathbb{H}$ whose continuous extension to $S^1$ equals $h$. 

Note that $\| t\mu\|_{Th}=t\|\mu\|_{Th}$, for $t>0$. Then, for $\|\mu\|_{Th}<\infty$, we have that
the earthquake path $t\mapsto E^{t\mu}|_{S^1}$, for $t>0$, defines a path
of quasisymmetric maps, which is a path in $T(\mathbb{H})$ when the maps are normalized
to fix $1$, $i$ and $-1$.

\section{Liouville measure, geodesic currents and uniform weak* topology}

Let $G(\mathbb{H})$ be the space of oriented complete geodesics in
the hyperbolic plane $\mathbb{H}$. Each oriented geodesic is determined by a pair of its two ideal endpoints on $S^1$ which gives
$$
G(\mathbb{H})\cong S^1\times S^1-diag
$$
where $diag$ is the diagonal in $S^1\times S^1$. If $[a,b],[c,d]\subset S^1$ are disjoint closed arcs, then the set $[a,b]\times [c,d]$ is called a {\it box of geodesics}. 

The Liouville measure on $G(\mathbb{H})$ is given by
$$
L(A)=\int_A\frac{dtds}{|e^{it}-e^{is}|^2}
$$
for any Borel set $A\subset G(\mathbb{H})$. If $A=[a,b]\times [c,d]$, then we have
$$
L([a,b]\times [c,d])=|\log\frac{(c-a)(d-b)}{(d-a)(c-b)}|.
$$
In other words, the Liouville measure of a box of geodesics is the logarithm of a 
cross-ratio of the four endpoints defining the box. Consequently, the Liouville measure is invariant under isometries of $\mathbb{H}$ and under the $\mathbb{Z}_2$-action that changes the orientation of geodesics.

A {\it geodesic current} $\alpha$ is a positive Borel measure on $G(\mathbb{H})$. Define the {\it supremum norm} of $\alpha$ by
$$
\|\alpha\|_{\sup}=\sup_{L(Q)=\log 2}\alpha (Q)
$$
The space $\mathcal{M}(G(\mathbb{H}))$ consists of all geodesic currents with finite supremum norm.

Note that a measured lamination is a geodesic currents whose support is a geodesic lamination. If a measured lamination has finite Thurston's norm then it has finite supremum norm. Thus
$$
ML_{bdd}(\mathbb{H})\subset \mathcal{M}(G(\mathbb{H})).
$$

We define the uniform weak* topology on $\mathcal{M}(G(\mathbb{H}))$ which will be used to introduce Thurston's boundary to Teichm\"uller spaces of infinite surfaces. The uniform weak* topology (in an equivalent form) was introduced in \cite{MS} on the space $ML_{bdd}(\mathbb{H})$.

\begin{definition} A sequence $\alpha_n\in\mathcal{M}(G(\mathbb{H}))$ converges to $\alpha\in \mathcal{M}(G(\mathbb{H}))$ in the {\it uniform weak* topology} if for any continuous $f:G(\mathbb{H})\to\mathbb{R}$ with compact support we have
$$
\sup_{\gamma\in Isom(\mathbb{H})}\int_{G(\mathbb{H})}fd[\gamma^{*}(\alpha_n-\alpha )]\to 0
$$
as $n\to\infty$, where $Isom(\mathbb{H})$ is the space of isometries of $\mathbb{H}$.
\end{definition} 

An equivalent definition of the uniform weak* topology was first given on $ML_{bdd}(\mathbb{H})$ (cf. \cite{MS}). The main result in \cite{MS} is that the earthquake measure map
$$
EM:T(\mathbb{H})\to ML_{bdd}(\mathbb{H})
$$
is a homeomorphism for the uniform weak* topology on $ML_{bdd}(\mathbb{H})$. In other words, 
the uniform weak* topology is a natural topology on measured laminations which makes 
correspondence between quasisymmetric maps and their earthquake measures bi-continuous.

\section{Embedding of Teichm\"uller space into geodesic currents space}

We define a map from the universal Teichm\"uller space $T(\mathbb{H})$ into the space of geodesic currents $\mathcal{M}(G(\mathbb{H}))$. Namely, the {\it Liouville map}
$$
\mathcal{L}: T(\mathbb{H})\to \mathcal{M}(G(\mathbb{H}))
$$
is given by the pull-back
$$
\mathcal{L}(h)=h^{*}L
$$
where $h\in T(\mathbb{H})$. 

\begin{theorem}
\label{thm:embedding}
The Liouville map
$$
\mathcal{L}:T(\mathbb{H})\to\mathcal{M}(G(\mathbb{H}))
$$
is a homeomorphism onto its image, where $\mathcal{M}(G(\mathbb{H}))$ is equipped with the uniform weak* topology. In addition, $\mathcal{L}(T(\mathbb{H}))$ is closed and unbounded subset of $\mathcal{M}(G(\mathbb{H}))$.
\end{theorem}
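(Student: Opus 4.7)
The strategy is to establish, in order: injectivity of $\mathcal{L}$; continuity of $\mathcal{L}$ in the uniform weak$^{*}$ topology; continuity of $\mathcal{L}^{-1}$ on the image; and finally closedness and unboundedness of $\mathcal{L}(T(\mathbb{H}))$. Injectivity follows from the cross-ratio interpretation of the Liouville measure of a box: since $L([a,b]\times[c,d])$ is (up to sign) the logarithm of the cross ratio of the endpoints, the measure $h^{*}L$ records the cross ratios $[h(a),h(b);h(c),h(d)]$ for all quadruples, and a homeomorphism of $S^{1}$ is determined by its cross ratios up to a M\"obius transformation, which is pinned down by the normalization $h(1)=1$, $h(i)=i$, $h(-1)=-1$.

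For continuity in both directions the key reduction is the identity $\gamma^{*}(h^{*}L)=(h\circ\gamma)^{*}L$ together with the fact that post-composing $h$ by an isometry of $\mathbb{H}$ does not affect $h^{*}L$ (because $L$ is isometry-invariant). Thus one can write $\gamma^{*}(h^{*}L)=\tilde h_{\gamma}^{*}L$ where $\tilde h_{\gamma}=\gamma'^{-1}\circ h\circ\gamma$ is the unique normalized representative in $T(\mathbb{H})$ of $h\circ\gamma$, and the quasisymmetric constant of $\tilde h_{\gamma}$ equals that of $h$. Uniform weak$^{*}$ convergence $h_{n}^{*}L\to h^{*}L$ therefore becomes the statement that ordinary weak$^{*}$ convergence $\tilde h_{n,\gamma}^{*}L\to\tilde h_{\gamma}^{*}L$ holds uniformly in $\gamma$. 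To prove continuity of $\mathcal{L}$ I would check that a Teichm\"uller-convergent sequence $h_{n}\to h$ remains Teichm\"uller-convergent after conjugation by isometries with a uniform rate, and then use the explicit $\log$-cross-ratio form of $L$ to estimate the Liouville measures of $\tilde h_{n,\gamma}$ on the compact set $\mathrm{supp}(f)$ with constants independent of $\gamma$. For continuity of $\mathcal{L}^{-1}$ I would use continuous test functions approximating indicators of boxes of Liouville measure $\log 2$; uniform weak$^{*}$ convergence then forces uniform-in-$\gamma$ convergence of the cross ratios of $\tilde h_{n,\gamma}$ on every such box, which is exactly Teichm\"uller convergence $h_{n}\to h$.

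For closedness, suppose $h_{n}^{*}L\to\alpha$ in uniform weak$^{*}$ with $\alpha\in\mathcal{M}(G(\mathbb{H}))$. The bound $\|\alpha\|_{\sup}<\infty$ combined with the cross-ratio interpretation yields a uniform bound on the quasisymmetric constants $K(h_{n})$; the family of normalized $K$-quasisymmetric maps is compact in the uniform topology on $S^{1}$, so a subsequence converges uniformly to a normalized quasisymmetric $h\in T(\mathbb{H})$, and continuity of $\mathcal{L}$ forces $\alpha=h^{*}L$. For unboundedness, fix any non-trivial $\mu\in ML_{bdd}(\mathbb{H})$ and consider the earthquake path $t\mapsto E^{t\mu}|_{S^{1}}$. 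By Theorem 2 of the introduction, $t^{-1}(E^{t\mu})^{*}L\to\mu$ in uniform weak$^{*}$, so the supremum norms $\|(E^{t\mu})^{*}L\|_{\sup}$ grow linearly in $t$ and the image escapes every bounded set in $\mathcal{M}(G(\mathbb{H}))$.

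The main obstacle will be the interplay between Teichm\"uller convergence of $h_{n}$ and the uniform-over-all-isometries requirement of the topology: one must show that the quantitative estimate relating quasisymmetric distortion to the $L$-measure of boxes is itself uniform under all isometric re-normalizations $h\mapsto\tilde h_{\gamma}$, and that a single compactly supported test function, together with its isometric translates, suffices to detect cross-ratio convergence on every box of a given size; this is the technical heart of both continuity statements.
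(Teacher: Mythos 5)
Your proposal follows essentially the same route as the paper: injectivity via cross-ratios of boxes and the three-point normalization, continuity in both directions by reducing the supremum over isometries to a single quantitative estimate relating Teichm\"uller closeness to Liouville measures of boxes of controlled size (the paper's Lemma \ref{lem:LMeas_Teich_neigh}, whose isometry-invariant hypothesis $\delta\leq L(Q)\leq\log 2$ is exactly what makes the bound uniform in $\gamma$), and closedness via a uniform bound on quasisymmetric constants plus compactness of normalized $K$-quasisymmetric families. The only cosmetic difference is that you phrase the uniformity through the conjugated representatives $\tilde h_{\gamma}$ while the paper works directly with $\gamma^{*}$ of the measures, and you supply an explicit unboundedness argument (via earthquake paths) where the paper simply asserts it.
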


\begin{proof}
We first establish that $\mathcal{L}$ is injective. Indeed, $h\in T(\mathbb{H})$ is normalized to fix $1,i,-1\in S^1$. For $x\in S^1-\{ 1,i,-1\}$, denote by $Q_x$ a box of geodesics whose defining intervals on $S^1$ have endpoints $1,i,-1$ and $x$. Then $L(h(Q_x))$ uniquely determines $h(x)$. Thus $\mathcal{L}$ is injective.

We prove that $\mathcal{L}$ is continuous. Consider $h_n\to h$ in $T(\mathbb{H})$. Let $f:G(\mathbb{H})\to \mathbb{R}$ be a continuous function with compact support in $G(\mathbb{H})$. Define $\mathcal{L}(h_n)=\alpha_n$ and $\mathcal{L}(h)=\alpha$. 

To estimate 
$$
\Big{|}\int_{G(\mathbb{H})}fd[\gamma^{*}(\alpha_n-\alpha )]\Big{|},
$$
we cover the support of $f$ by finitely many boxes of geodesics $\{ Q_i\}^m_{i=1}$ with disjoint interiors such that

$$
L(Q_i)\leq \log 2
$$
and
$$
|\max_{Q_i}f-\min_{Q_i}f|<\epsilon_0
$$
for all $1\leq i\leq m$ and fixed $\epsilon_0$ to be determined later. The number of boxes $m$ depends on $f$ and $\epsilon_0$.

Let 
$$
s=\sum_{i=1}^m(\max_{Q_i}f)\chi_{Q_i}
$$
be a simple function approximating $f$ and let $\gamma\in Isom(\mathbb{H})$.

Then
$$
\Big{|}\int_{G(\mathbb{H})}(f-s)d[\gamma^{*}(\alpha_n-\alpha)] 
\Big{|}\leq
\epsilon_0 \sum_{i=1}^m(\alpha (Q_i)+\alpha_n(Q_i))\leq 3\epsilon_0 \sum_{i=1}^m\alpha (Q_i)$$
where the second inequality holds for all $n\geq n_0$ with $n_0$ large enough such that $h_n$ is close enough to $h$ in $T(\mathbb{H})$ (cf. Lemma \ref{lem:LMeas_Teich_neigh}).

By using Lemma \ref{lem:LMeas_Teich_neigh} again, 
$$
\Big{|}\int_{G(\mathbb{H})}sd[\gamma^{*}(\alpha_n-\alpha )]
\Big{|}\leq\epsilon\max |f|\cdot m
$$
for all $n\geq n_1$, where $n_1=n_1(\delta ,\epsilon )$ is large enough such that $h_n\in N(h,\delta ,\epsilon )$ with $\delta=\min_i L(Q_i)$. 

By choosing $\epsilon_0$ and $\epsilon$ small enough, the quantity $ \Big{|}\int_{G(\mathbb{H})}
f d[\gamma^{*}(\alpha_n-\alpha )]\Big{|}$ is as small as we want for all $n\geq\max\{ n_0,n_1\}$, where $n_0,n_1$ depend on $\epsilon_0,\epsilon ,f,m$ and do not depend on $\gamma\in Isom(\mathbb{H})$. Thus $\alpha_n\to\alpha$ as $n\to\infty$ and $\mathcal{L}$ is continuous.

We prove that $\mathcal{L}^{-1}:\mathcal{L}(T(\mathbb{H}))\to T(\mathbb{H})$ is continuous. Consider $\alpha_n\to\alpha$ in $\mathcal{M}(G(\mathbb{H}))$ with $\mathcal{L}(h_n)=\alpha_n$ and $\mathcal{L}(h)=\alpha$.

First we prove that there is an upper bound on the quasisymmetric constants of $\{ h_n\}$. Assume on the 
contrary that the quasisymmetric constants of $\{ h_n\}$ go to infinity. Then there exists a sequence of boxes $\{ Q_n\}$ 
with $L(Q_n)=\log 2$ and $\alpha_n(Q_n)\to\infty$ as $n\to\infty$.
Fix a box $Q^{*}=[1,i]\times [-1,-i]$ and let $\gamma_n\in Isom(\mathbb{H})$ be such that 
$
\gamma_{n}^{-1}\Big{(}Q_n\Big{)}=Q^{*}.
$
Let $f:G(\mathbb{H})\to\mathbb{R}$ be a non-negative continuous function with compact support such that $f|_{Q^{*}}=1$. By $\alpha_n\to\alpha$, there exists $n_0$ such that, for all $n\geq n_0$,
$$
\int_{G(\mathbb{H})}fd[(\gamma_{n})^{*}\alpha_n]\leq \int_{G(\mathbb{H})}fd[(\gamma_{n})^{*}\alpha ]+1.$$

On the other hand, 
$$
\int_{G(\mathbb{H})}fd[(\gamma_n)^{*}\alpha_n]\geq \alpha_n(Q_n)\to\infty
$$
which gives a contradiction with the above inequality. Thus quasisymmetric constants of the sequence $\{ h_n\}$ are uniformly bounded.

To prove that $h_n\to h$ in $T(\mathbb{H})$, it is enough to prove that 
$$
\sup_{L(Q)=\log 2}|\alpha_n(Q)-\alpha (Q)|\to 0
$$
as $n\to\infty$.

For a given box $Q$ with $L(Q)=\log 2$, let $\gamma_Q\in Isom(\mathbb{H})$ be such that $\gamma_Q^{-1}(Q)=Q^{*}$.
Let $Q_{\delta}$ be a sub-box of $Q$ such that
$$
\gamma_Q^{-1}(Q_{\delta})=[e^{i\delta}, e^{i(\pi /2-\delta )}]\times [e^{i(\pi +\delta)},e^{i(3\pi /2-\delta )}]\subset Q^{*}.
$$

Then $Q-Q_{\delta}$ is the union of four boxes $Q_i(\delta )$, $i=1,\ldots ,4$, such that
$L(Q_i(\delta ))\to 0$ as $\delta\to 0$ for all $i$. Since $\{ h_n\}$ is a bounded sequence in $T(\mathbb{H})$, it follows that $\alpha_n(Q_i(\delta ))\to 0$ and $\alpha (Q_i(\delta ))\to 0$ as $\delta\to 0$ uniformly in $n$. Finally, let $f_{\delta}:G(\mathbb{H})\to\mathbb{R}
$ be a positive  continuous function with $supp(f_{\delta})\subset Q^{*}$, $\| f_{\delta}\|_{\infty}=1$ and $f_{\delta}|_{[e^{i\delta}, e^{i(\pi /2-\delta )}]\times [e^{i(\pi +\delta)},e^{i(3\pi /2-\delta )}]}=1$.

It follows
$$
\Big{|}\alpha_n(Q_{\delta})-\alpha (Q_{\delta})\Big{|}\leq \Big{|}\int_{G(\mathbb{H})} f_{\delta}
d[(\gamma_Q)^{*}(\alpha_n-\alpha )]\Big{|}+ \alpha_n(Q-Q_{\delta})+\alpha (Q-Q_{\delta}).
$$
Since $ \alpha_n(Q-Q_{\delta})$ and $\alpha (Q-Q_{\delta})$ are as small as we want (uniformly in $n$) for $\delta >0$ small enough and
$$
\Big{|}\int_{G(\mathbb{H})} f_{\delta}
d((\gamma_Q)^{*}(\alpha_n-\alpha ))\Big{|}\to 0
$$
as $n\to\infty$, it follows that
$$
|\alpha_n(Q_{\delta})-\alpha (Q_{\delta})|
$$
is small for $n$ large. Thus
$$
\sup_Q|\alpha_n(Q)-\alpha (Q)|\to 0
$$
as $n\to\infty$ and $\mathcal{L}^{-1}:\mathcal{L}(T(\mathbb{H}))\to T(\mathbb{H})$ is continuous.

We prove that $\mathcal{L}(T(\mathbb{H}))$ is closed in 
$\mathcal{M}(G(\mathbb{H}))$. Indeed, let $\alpha_n\to\alpha$ in the uniform weak* topology on
$\mathcal{M}(G(\mathbb{H}))$,
where $\mathcal{L}(h_n)=\alpha_n$ for $h_n\in T(\mathbb{H})$. 
Consequently, for any continuous $f:G(\mathbb{H})\to\mathbb{R}$ with compact support, we have
$$
\sup_{\gamma\in Isom(\mathbb{H})}|\int_{G(\mathbb{H})}fd[\gamma^{*}(\alpha_n)]|\leq C(f)
$$
where $C(f)$ is independent of $n$. By choosing $f:G(\mathbb{H})\to\mathbb{R}$ to be positive and $f|_{Q^{*}}=1$, we get that
$\sup_{L(Q)=\log 2}\alpha_n(Q)<C(f)$ for all $n$; thus
 $h_n=\mathcal{L}^{-1}(\alpha_n)$ is 
bounded in $T(\mathbb{H})$. 

It follows that there exists a 
subsequence $h_{n_k}$ which pointwise converges to a quasisymmetric map $h$ on $S^1$. Let $\beta=\mathcal{L}(h)$. Thus
$$
\alpha_n(Q)\to\beta (Q)
$$
as $n\to\infty$ for each box of geodesics $Q$. Thus $\alpha =\beta$ by the uniqueness of measures.

Finally, $\mathcal{L}(T(\mathbb{H}))$ is clearly unbounded and $\mathcal{L}$ is a proper map because $\mathcal{L}^{-1}(M)$ is bounded whenever $M\subset\mathcal{M}(G(\mathbb{H}))$ is bounded by the proof above.
\end{proof}

\section{The fundamental lemma}

The following lemma is used when considering convergence of an earthquake path $E^{t\mu}$ as $t\to\infty$ on Thurston's boundary of $T(\mathbb{H})$ in the uniform weak* topology.

\begin{lemma}
\label{lem:fund}
Let $\beta_n\in ML_{bdd}(\mathbb{H})$ be a bounded (in Thurston's norm) sequence that converges in the weak* topology to $\beta\in ML_{bdd}(\mathbb{H})$.
Assume $Q=[a,b]\times [c,d]$ is a box of geodesics with $\beta (\partial Q)=0$. Then, for $t_n>0$ and $t_n\to\infty$ as $n\to\infty$, 
$$
\frac{1}{t_n}L(E^{t_n\beta_n}(Q))\to \beta (Q)
$$
as $n\to\infty$, where $E^{t_n\beta_n}$ is an earthquake  path with an earthquake measure $t_n\beta_n$.
\end{lemma}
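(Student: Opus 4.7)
The plan is to establish an asymptotic identity of the form
\[ L(E^{t\mu}(Q)) \;=\; t\,\mu(Q^\circ) + O(1), \qquad t\to\infty, \]
with the $O(1)$ error uniform in $\mu\in ML_{bdd}(\mathbb H)$ of bounded Thurston norm, and then to conclude by applying the Portmanteau theorem to the weak* convergence $\beta_n\to\beta$, using $\beta(\partial Q)=0$. Here $Q^\circ=(a,b)\times(c,d)$ denotes the interior of $Q$ as a set of geodesics.

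For the identity I would partition the support of $\mu$ according to how a leaf $\ell$ meets the four-point set $\{a,b,c,d\}$. If both endpoints of $\ell$ lie in a single arc of $S^1\setminus\{a,b,c,d\}$, then $a,b,c,d$ lie on a common side of $\ell$, so the earthquake acts on them by a single hyperbolic isometry and the cross ratio defining $L(Q)$ is preserved. If $\ell\in Q^\circ$, i.e.\ one endpoint is in $(a,b)$ and the other in $(c,d)$, then the earthquake shears the pair $\{a,d\}$ against $\{b,c\}$; a direct cross-ratio computation, performed after conjugating $\ell$ to the imaginary axis in $\mathbb H$, gives $L(E^{w\ell}(Q))-L(Q)=w+O(1)$ as $w\to\infty$. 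If instead $\ell$ has one endpoint in $(b,c)$ and the other in $(d,a)$ (a ``gap leaf'' separating the two arcs defining $Q$), then the analogous computation yields a change in the log cross ratio that is bounded by a constant depending only on $L(Q)$, uniformly in the weight. Integrating these per-leaf contributions against $\mu$ and using $\|\mu\|_{\mathrm{Th}}<\infty$ to control the mass of gap leaves produces the claimed identity.

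Granting the identity, apply it with $\mu=t_n\beta_n$ and divide by $t_n$:
\[ \frac{1}{t_n}L(E^{t_n\beta_n}(Q)) \;=\; \beta_n(Q^\circ) + O(1/t_n), \]
with the implicit constant uniform in $n$ since $\|\beta_n\|_{\mathrm{Th}}$ is bounded by hypothesis. The error term vanishes as $n\to\infty$. Weak* convergence $\beta_n\to\beta$ together with $\beta(\partial Q)=0$ gives, via the Portmanteau theorem, $\beta_n(Q^\circ)\to\beta(Q^\circ)=\beta(Q)$. The lemma follows.

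The main obstacle I expect is the uniform $O(1)$ bound: a single leaf outside $Q^\circ$ contributes a bounded amount to the change in $L(Q)$, but earthquakes along distinct leaves neither commute nor compose additively, so one must argue that the aggregate distortion of the cross ratio stays bounded under only the Thurston-norm hypothesis. A secondary delicate point, handled precisely by the assumption $\beta(\partial Q)=0$, is the treatment of mass of $\beta_n$ accumulating near or on $\partial Q$: Portmanteau requires the boundary to carry no limit mass so that $\beta_n(Q^\circ)$ and $\beta_n(\overline Q)$ both converge to the same value $\beta(Q)$.
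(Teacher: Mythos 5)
Your central claim --- the asymptotic identity $L(E^{t\mu}(Q))=t\,\mu(Q^\circ)+O(1)$ with the error uniform over $\|\mu\|_{Th}\leq 1$ --- is false, and this is the heart of the matter rather than a technical obstacle. First, mass on $\partial Q\setminus Q^\circ$ can contribute its full weight: if $\mu$ is a unit atom on a geodesic $(a,v)$ with $v\in(c,d)$, the computation in Lemma \ref{lem:simple_earthq_estimate} (the case $x\in[d,a]$, $y\in[c,d]$, evaluated at $x=a$) gives $L(E^{t\mu}(Q))=t+O(1)$, while $t\,\mu(Q^\circ)=0$. Replacing $Q^\circ$ by the closed box does not help: a unit atom at the corner geodesic $(b,d)\in Q$ leaves $L(E^{t\mu}(Q))$ bounded. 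Second, and worse, the error cannot be uniform even for laminations with no mass on $\overline{Q}$: taking unit atoms $\mu_\delta$ on geodesics $(a_\delta,c)$ with $a_\delta\in(d,a)$ and $a_\delta\to a$, one has $\mu_\delta(Q)=0$ for every $\delta>0$, yet by continuity of the single-leaf earthquake in its support and Lemma \ref{lem:estimate_L_box_Epath} one can choose $\delta(t)\to 0$ with $L(E^{t\mu_{\delta(t)}}(Q))\geq t/2$. This is exactly the phenomenon the lemma must control: the $\beta_n$ may carry mass drifting onto $\partial Q$, and only the combination of $\beta(\partial Q)=0$ with weak* convergence tames it. Consequently the Portmanteau step cannot be postponed to the end as you propose; it must be interwoven with the geometric estimate. (Two further problems: your trichotomy of leaves is not exhaustive --- a leaf joining $(d,a)$ to $(c,d)$ falls in none of your three classes --- and, as you yourself note, the passage from per-leaf estimates to an estimate for $E^{t\mu}$ is unjustified, since earthquakes along distinct leaves neither commute nor contribute additively to the cross ratio.)

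The paper's proof has a genuinely different, two-sided structure which you would need to adopt. Fix $\epsilon>0$; using $\beta(\partial Q)=0$ and the weak* convergence, choose collar boxes abutting $\partial Q$ with $\beta$-null boundary and $\beta$-mass less than $\epsilon/2$, hence $\beta_n$-mass less than $\epsilon$ for all large $n$. Decompose $\beta_n$ into finitely many measured laminations supported in these regions, write $E^{t_n\beta_n}$ (suitably normalized on a separating stratum) as a composition of the corresponding earthquakes, discard the factors that can only decrease (respectively increase) $L(Q)$, and bound the remaining factors via Proposition \ref{prop:min-max-earthquake} and Lemma \ref{lem:estimate_L_box_Epath}. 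This yields
$$
t_n\bigl(\beta_n(Q)-4\epsilon\bigr)+C(\epsilon)\ \leq\ L(E^{t_n\beta_n}(Q))\ \leq\ t_n\bigl(\beta_n(Q)+4\epsilon\bigr)+C'(\epsilon),
$$
where the error is $4\epsilon t_n$ plus constants that blow up as $\epsilon\to 0$ --- not $O(1)$. One divides by $t_n$, takes $\limsup$ and $\liminf$, and only then lets $\epsilon\to 0$. The uniform $O(1)$ bound you flagged as ``the main obstacle'' is not merely difficult to prove; it does not hold, so the argument must be restructured along these lines.
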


\begin{remark}
The above convergence is assumed to be in the weak* topology. However, we consider convergence of earthquakes  $E^{t_n\beta_n}$ with variable measures $t_n\beta_n$ which allows us to use this lemma when proving convergence in the uniform weak* topology in the next section.
\end{remark}

\begin{remark}
Let $E^{t_n\beta_n}|_{S^1}=h_n$. Then $h_n$ is a quasisymmetric map of $S^1$ and $h_n^{*}L$ is a full support measure on $G(\mathbb{H})$. The limiting measure $\beta$ is supported on a geodesic lamination, hence its support is small inside $G(\mathbb{H})$. 
\end{remark}

\begin{proof}
Since $\beta_n\to\beta$ in the weak* topology as $n\to\infty$ and $\beta (\partial Q)=0$ we have  $\beta_n(Q)\to\beta (Q)$ as $n\to\infty$. 

We first give an upper bound to $\lim_{n\to\infty}\frac{1}{t_n}L(E^{t_n\beta_n}(Q))$.
Fix $\epsilon >0$.
Let $a'\in (d,a)$ and $c'\in (b,c)$ be such that $$\beta ([a',a]\times [c,d]), \beta ([a,b]\times [c',c])<\frac{\epsilon}{2}.$$ 
Since a positive, countably additive, finite measure can have at most countably many disjoint sets of non-zero measure, it follows that $a'$ and $c'$ can be chosen such that $$\beta (\partial ([a',a]\times [c,d]))=\beta (\partial ([a,b]\times [c',c]))=\beta (\partial ([a',a]\times [c',c]))=0.$$ 
Then there exists $n_0=n_0(\epsilon )$ such that, for all $n\geq n_0$, $$\beta_n ([a',a]\times [c,d])<\epsilon, $$ 
$$\beta_n ([a,b]\times [c',c])<\epsilon $$
and
$$\beta_n ([a',a]\times [c',c])<\epsilon .$$

We partition measured lamination $\beta$ into a finite sum of measured laminations as follows (cf. Figure 1)
\begin{equation}
\label{eq:division}
\begin{array}l
\beta_1^n(B)=\beta_n(B\cap Q),\\
\beta_2^n(B)=\beta_n(B\cap [a,b]\times [c',c)),\\
\beta_3^n(B)=\beta_n(B\cap [a,b]\times [b,c')),\\
\beta_4^n(B)=\beta_n(B\cap [a',a)\times [c,d]),\\
\beta_5^n(B)=\beta_n(B\cap [d,a')\times [c,d]),\\
\beta_6^n(B)=\beta_n(B)-\sum_{i=1}^5\beta_i^n(B),
\end{array}
\end{equation}
where $B\subset G(\mathbb{H})$ is any Borel set. 
Note that $\beta_i^n$ are defined by restricting $\beta_n$ to boxes of geodesics with some of the boxes not being closed. This is done to avoid ambiguity because an intersection of two boxes along their boundaries might have non-zero $\beta_n$-mass. For example, $\beta_2^n$ is defined by restricting to box $[a,b]\times [c',c)$ because $\beta_n([a,b]\times \{ c\})$ might be non-zero and we defined $\beta_n^1$ by restricting to box $[a,b]\times [c,d]$. In this case the support of $\beta_2^n$ might contain geodesics in $[a,b]\times\{ c\}$ while $\beta_2^n([a,b]\times\{ c\})=0$ (because the support is defined as the smallest closed set whose complement has zero mass). Similar property holds for other measures.
We divide our considerations into several cases.

\vskip .2 cm

\begin{figure}
\noindent\makebox[\textwidth]{%
\includegraphics[width=10cm]{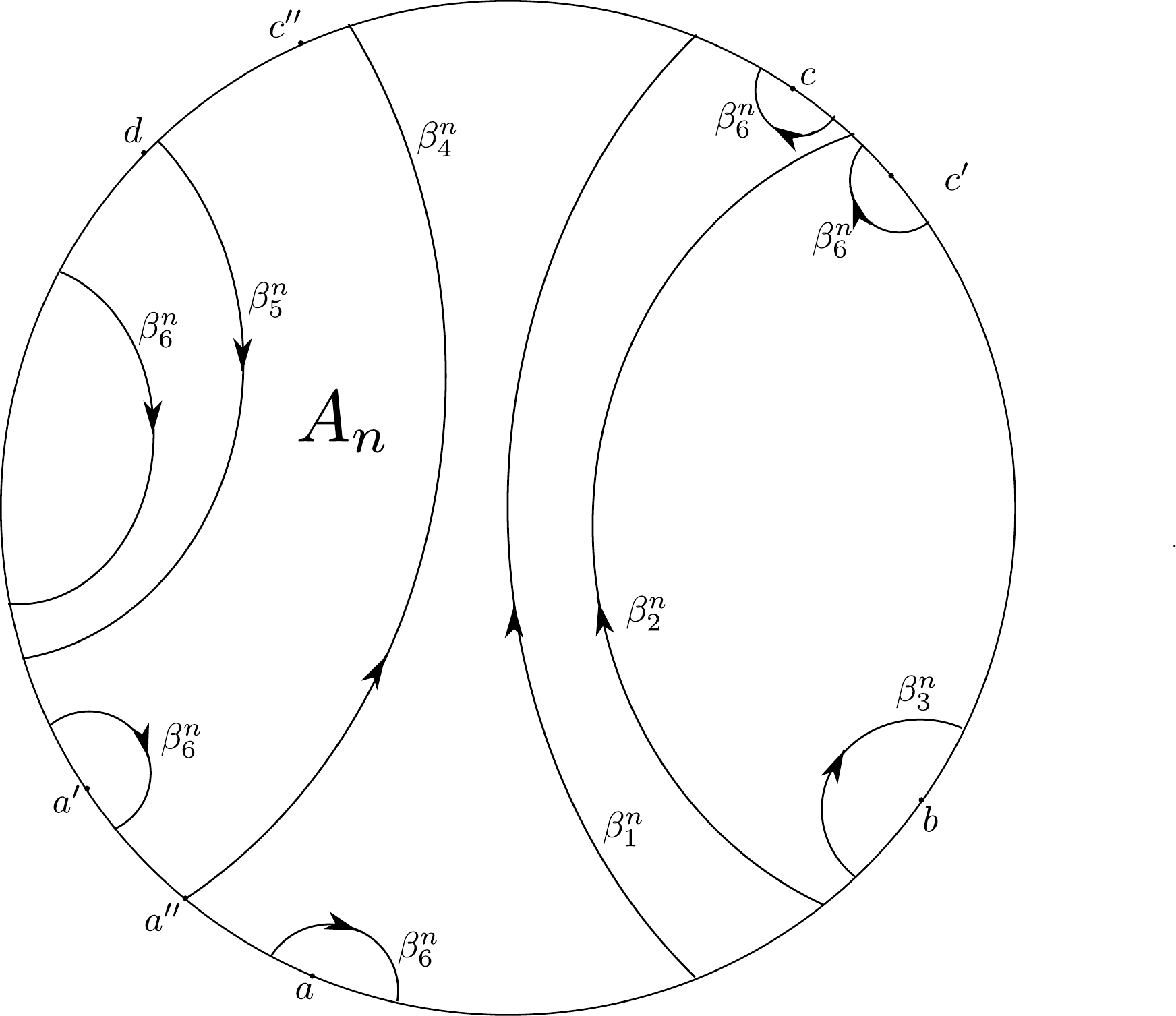}
}
\caption{An upper bound on the Liouville measure: $\beta^n_1\neq 0$.}
\end{figure}%

\vskip .2 cm

\noindent {\bf Case 1.} Assume that $\beta^n_1$ is non-trivial.  
Let $A_n$ be the stratum of $\beta_n$ that separates geodesics of $|\beta_n|$ in $[a',b]\times [c,d]$ from interval $(d,a')$ on $S^1$. In particular, if $\beta_4^n$ and $\beta_5^n$ are non-trivial then $A_n$ separates the
support of $\beta_4^n$ from the support of $\beta_5^n$.  
Note that $A_n$ could be either a hyperbolic polygon or a geodesic. In the case that $A_n$ is a geodesic then it is in the support of both $\beta_4^n$ and $\beta_5^n$. We normalize earthquakes $E^{t_n\beta_n}$ and $E^{t_n\beta_i^n}$, for $i=1,\ldots ,6$, to be the identity on a stratum (that contains) $A_n$. Let $a''$ be a point on the boundary of $A_n$ in the interval
$[a',a]$ and let $c''$ be a point of $A_n$ in the interval $[c,d]$.

Then we have
$$
E^{t_n\beta_n}|_{[a'',c'']}=E^{t_n\beta_4^n}\circ E^{t_n\beta_1^n}\circ E^{t_n\beta_2^n}\circ E^{t_n\beta_3^n}\circ E^{t_n\beta_6^n}
$$
and
$$
E^{t_n\beta_n}|_{[c'',a'']}=E^{t_n\beta_5^n}\circ E^{t_n\beta_6^n}.
$$

We estimate $L(E^{t_n\beta_n}([a,b]\times [c,d]))$ from the above. The action of earthquake $E^{t_n\beta_6^n}$ fixes points $b$ and $d$, and possibly moves $a$ towards $b$ and possibly moves $c$ towards $d$ because it moves all points to the left relative the stratum $A_n$. This decreases the Liouville measure of the box $[a,b]\times [c,d]$ and we delete $E^{t_n\beta_6^n}$ from the definition of $E^{t_n\beta_n}$.

Earthquake $E^{t_n\beta_3^n}$ moves $b$ towards $c$ and it can at most reach point $c'$. Similar, earthquake $E^{t_n\beta_5^n}$ moves $d$ towards $a$ and the closest it can get is $a'$.
Therefore, it is enough to consider the action of $E^{t_n\beta_4^n}\circ E^{t_n\beta_1^n}\circ E^{t_n\beta_2^n}$ on box $[a,c']\times [c,a']$. 

The support of $\beta_1^n+\beta_2^n+\beta_4^n$ is in $[a',b]\times [c',d]\subset [a',c']\times [c',a']$. The second inequality in Proposition \ref{prop:min-max-earthquake} implies that 
$$
L(E^{t_n\beta_n}([a,b]\times [c,d]))\leq L([a,T_n(c')]\times [c,a'])
$$
where $T_n$ is a hyperbolic translation with repelling fixed point $a$, attracting fixed point $c$ and translation length $t_n(\beta_1^n+\beta_2^n+\beta_4^n)$. Then Lemma \ref{lem:estimate_L_box_Epath} gives
\begin{equation}
\label{eq:case1}
\begin{split}
L(E^{t_n\beta_n}([a,b]\times [c,d]))\leq t_n(\beta_1^n+\beta_2^n+\beta_4^n)([a',b]\times [c',d])+L([a,c']\times [c,a'])\\ \leq t_n[\beta_n([a,b]\times [c,d])+4\epsilon]
+L([a,c']\times [c,a']).
\end{split}
\end{equation}

 \vskip .2 cm

\noindent {\bf Case 2.} Assume that $\beta^n_1$ is trivial and that either $\beta^n_4$ or $\beta^n_2$ is non-trivial. Let $A_n$ be a stratum of $\beta_n$ that separates the support $|\beta^n_4|\cup |\beta^n_2|$ from $[d,a']$. The reasoning in Case 1 applies in this case as well and we obtain
\begin{equation}
\label{eq:case2}
L(E^{t_n\beta_n}([a,b]\times [c,d]))\leq t_n 4\epsilon
+L([a,c']\times [c,a']).
\end{equation}

\vskip .2 cm

\noindent {\bf Case 3.} Assume that $\beta^n_1+\beta^n_2+\beta^n_4$ from (\ref{eq:division}) is trivial. If $\beta_n([d,a]\times [b,c])=0$ then the reasoning in the above case applies to get
$$
L(E^{t_n\beta_n}([a,b]\times [c,d]))\leq t_n 4\epsilon
+L([a,c']\times [c,a']).
$$
Assume that  $\beta_n([d,a]\times [b,c])\neq 0$. We introduce a new division of $\beta_n$ as follows. For a Borel set $B\subset G(\mathbb{H})$ we define (cf. Figure 2)
\begin{equation}
\label{eq:division1}
\begin{array}l
\gamma_1^n(B)=\beta_n(B\cap [b,c']\times [d,a']),\\
\gamma_2^n(B)=\beta_n(B\cap (c',c]\times (a',a]),\\
\gamma_3^n(B)=\beta_n(B\cap ([a,b]\times [b,c]\cup [c,d]\times [d,a])),\\
\gamma_4^n(B)=\beta_n(B\cap ([d,a]\times [a,b]\cup [b,c]\times [c,d])),\\
\gamma_5^n(B)=\beta_n(B)-\sum_{i=1}^4\gamma_i^n(B),
\end{array}
\end{equation}
Note that either $\gamma^n_1$ or $\gamma^n_2$ is trivial. We normalize $E^{t_n\beta_n}$ and $E^{t_n\gamma^n_i}$ for $i=1,2,\ldots ,5$ to be the identity on a stratum (that contains a stratum) $A_n$ of $\beta_n$ that separates $|\gamma^n_1|\cup |\gamma^n_2|$ from interval $[c,d]$ on $S^1$. Then
$$
E^{t_n\beta_n}=E^{t_n\gamma^n_1}\circ E^{t_n\gamma^n_2}\circ E^{t_n\gamma^n_3}\circ E^{t_n\gamma^n_4}\circ E^{t_n\gamma^n_5}.
$$

\begin{figure}
\noindent\makebox[\textwidth]{
\includegraphics[width=10cm]{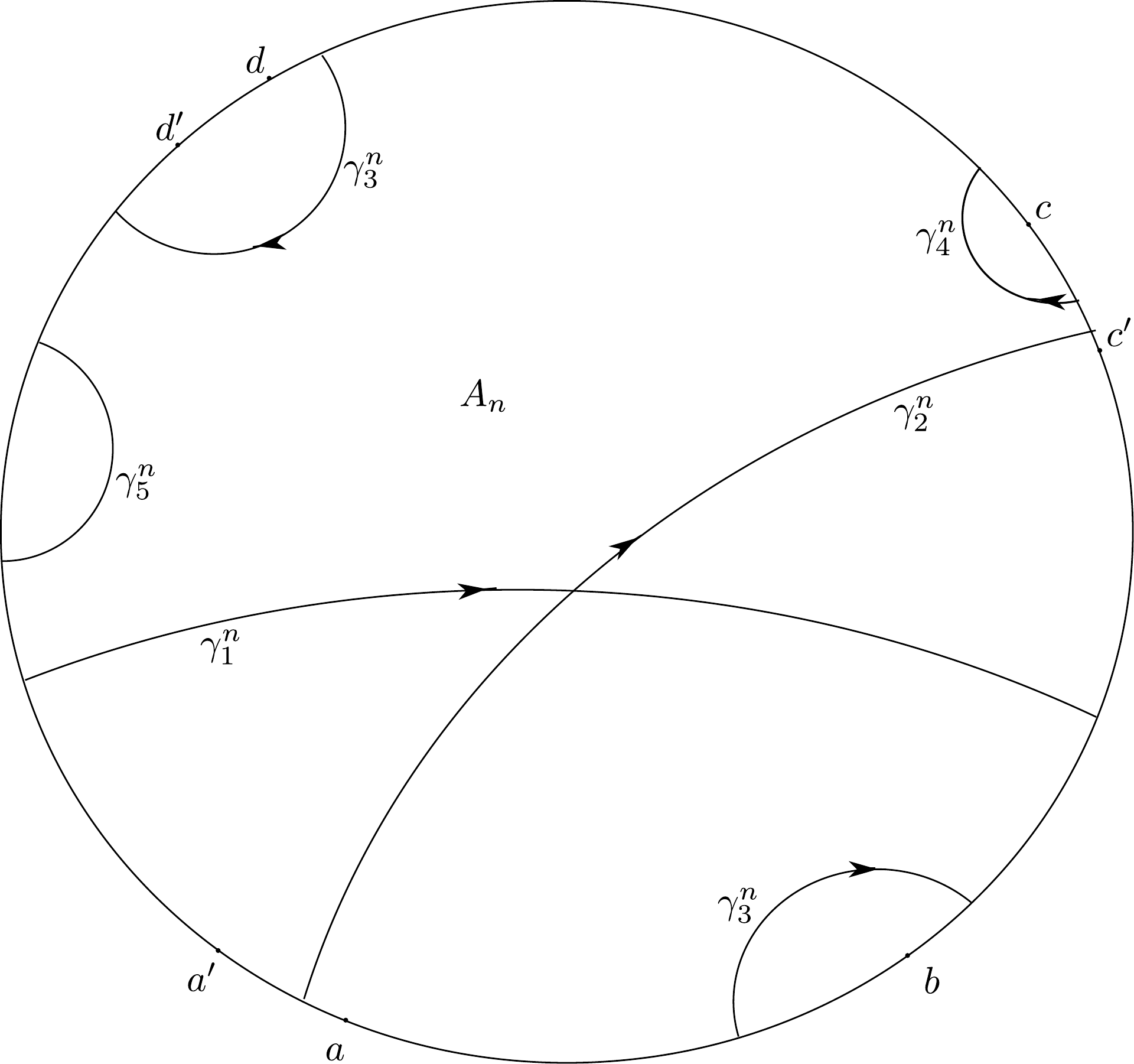}}
\caption{An upper bound on the Liouville measure: $\beta^n_1+\beta^n_2+\beta^n_4= 0$. Either $\gamma^n_1$ or $\gamma^n_2$ is trivial.}
\end{figure}

Note that $E^{t_n\gamma^n_5}$ fixes $a$, $b$, $c$ and $d$, and we can ignore it. Moreover, $E^{t_n\gamma^n_4}$ moves point $a$ towards $b$, and it moves $c$ towards $d$, and it can only decrease the Liouville measure of $[a,b]\times [c,d]$. Therefore we can ignore $E^{t_n\gamma^n_4}$. In addition, $E^{t_n\gamma^n_3}$ can move $b$ counterclockwise to at most $c'$, and it can move $d$ counterclockwise to at most $a'$, and it fixes $a$ and $c$. Therefore it is enough to consider the action of $E^{t_n\gamma^n_1}\circ E^{t_n\gamma^n_2}$ on $[a,c']\times [c,a']$. 

Assume first that $\gamma^n_1$ is trivial. Then the above and Proposition \ref{prop:min-max-earthquake} give 
\begin{equation}
\label{eq:case3.1}
\begin{split}
L(E^{t_n\beta_n})([a,b]\times [c,d]))\leq L(E^{t_n\gamma^n_2})([a,c']\times [c,a']))
\leq  t_n\beta_n(\\ [a',a]\times  [c',c])+L([a,c']\times [c,a'])\leq
t_n\epsilon +L([a,c']\times [c,a'])
\end{split}
\end{equation}

Assume next that $\gamma^n_2$ is trivial. Then $E^{t_n\gamma^n_1}$ fixes $c'$, it moves $a$ counterclockwise towards $c'$ and it fixes $c$ and $d$. Therefore
\begin{equation}
\label{eq:case3.2}
L(E^{t_n\beta_n}([a,b]\times [c,d]))\leq L([a,c']\times [c,a']).
\end{equation}

\vskip .1 cm

By dividing equations (\ref{eq:case1}), (\ref{eq:case2}), (\ref{eq:case3.1}) and (\ref{eq:case3.2}) with $t_n$ and letting $n\to\infty$ we get
$$
\limsup_{n\to\infty}\frac{1}{t_n}L(E^{t_n\beta_n}([a,b]\times [c,d])\leq \beta ([a,b]\times [c,d])
$$
since $\epsilon >0$ was arbitrary.

\vskip .2 cm
 
\noindent {\bf A lower bound.} We a find {\it lower} bound for $L(E^{t_n\beta_n}([a,b]\times [c,d]))$ when $n$ is large enough. Let $\epsilon >0$ be fixed. Since $\beta (\partial Q)=0$, it follows that there exists $b'\in (a,b)$ and $d'\in (c,d)$ such that (cf. Figure 3)
$$
\beta ([b',b]\times [c,d])+\beta ([a,b]\times [d',d])\leq \epsilon /2.
$$

\begin{figure}
\noindent\makebox[\textwidth]{
\includegraphics[width=10cm]{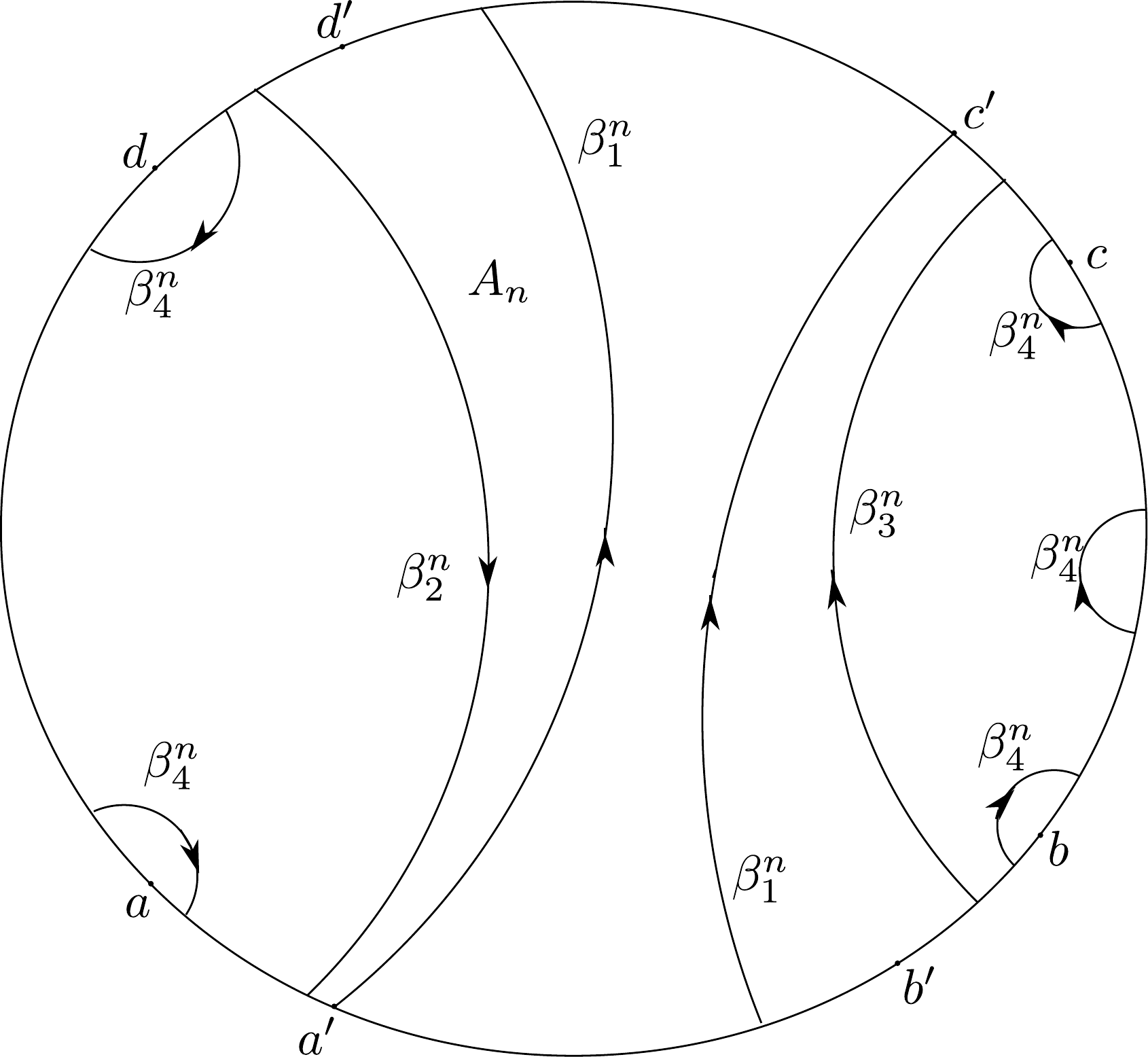}}
\caption{A lower bound on the Liouville measure.}
\end{figure}

In addition to satisfying above inequality, we can choose $b'$ and $d'$ such that
$$
\beta (\{ b'\}\times [c,d])+\beta ([a,b]\times \{ d'\} )=0 
$$
because a positive, countably additive, finite measure can have at most countably many disjoint sets of non-zero measure while we have uncountably many choices of $b'$ and $d'$. Then
$$
\beta (\partial ([b',b]\times [c,d]))+\beta (\partial ([a,b]\times [d',d]))=0
$$
which implies 
$$
\beta_n([b',b]\times [c,d])\to \beta ([b',b]\times [c,d])
$$
and 
$$
\beta_n([a,b]\times [d',d])\to \beta ([a,b]\times [d',d])
$$
as $n\to\infty$. This implies
$$
\beta_n ([b',b]\times [c,d])+\beta_n ([a,b]\times [d',d])\leq \epsilon .
$$
for all $n\geq n_0(\epsilon )$.

Let $c'\in [c,d']$ be an endpoint of a geodesic in $|\beta_n|\cap ([a,b']\times [c,d'])$ that is closest to $c$ in interval $[c,d']$, and $c'=d'$ if $|\beta_n|\cap ([a,b']\times [c,d'])=\emptyset$. 
Let $a'\in [a,b']$ be an endpoint of a geodesic in $|\beta_n|\cap ([a,b']\times [c,d'])$ that is closest to $a$ in the interval $[a,b']$, and $a'=b'$ if $|\beta_n|\cap ([a,b']\times [c,d'])=\emptyset$ (cf. Figure 3). 

We fix $n\geq n_0(\epsilon )$ and
 write $\beta_n$ as a finite sum of measured laminations as follows. For a Borel set $B\subset G(\mathbb{H})$, define
\begin{equation}
\begin{array}l
\beta_1^n(B)=\beta_n(B\cap([a',b']\times [c',d'])),\\
\beta_2^n(B)=\beta_n(B\cap ([a,b]\times (d',d]))),\\
\beta_3^n(B)=\beta_n(B\cap ((b',b]\times [c,d]))),\\
\beta_4^n(B)=\beta_n(B)-\sum_{i=1}^3\beta_i^n(B).
\end{array}
\end{equation}
We divide the analysis into several cases.

\vskip .2 cm

\noindent {\bf Case 1.} Assume that $\beta_1^n$ is non-trivial. This implies that no geodesic of the support of $\beta^n_4$ is in $[d,a]\times [b,c]$.
Normalize earthquakes $E^{t_n\beta_n}$ and $E^{t_n\beta_i^n}$, for $i=1,2,3,4$, to be the identity on a stratum (that contains stratum) $A_n$ of $\beta_n$ that separates the support $|\beta_1^n|$ of $\beta^n_1$ from $[d,a]\subset S^1$. Note that the stratum $A_n$ might be a  geodesic. We have
\begin{equation}
\begin{array}l
E^{t_n\beta_n}|_{[a',d']}=E^{t_n\beta_1^n}\circ E^{t_n\beta_3^n}\circ E^{t_n\beta_4^n},\\
E^{t_n\beta_n}|_{[d',a']}=E^{t_n\beta_2^n}\circ E^{t_n\beta_4^n}.
\end{array}
\end{equation}

We consider the image of $[a',b]\times [c',d]$ under $E^{t_n\beta_n}$. Since $E^{t_n\beta_4^n}$ is a left earthquake, chosen normalization implies that $a'$ 
and $c'$ are fixed, and possibly $b$ is moved towards $c'$, and possibly  $d$ is moved towards $a'$ for the fixed orientation on $S^1$. These movements  increase Liouville measure and since we are looking for a lower bound, we ignore the action of $E^{t_n\beta_4^n}$. In a similar fashion, earthquakes $E^{t_n\beta_2^n}$ and $E^{t_n\beta_3^n}$ can only increase Liouville measure of $[a',b]\times [c',d]$ and we ignore them.

It remains to estimate Liouville measure of $E^{t_n\beta_1^n}([a',b]\times [c',d])$. By Proposition \ref{prop:min-max-earthquake}, we have that $L(E^{t_n\beta_1^n}([a',b]\times [c',d]))$ is larger than $L([a',T(b)]\times [T(c'),d])$, where $T$ is a hyperbolic translation with translation length $t_n\beta_1^n([a',b]\times [c',d])$ whose repelling fixed point is $b'$ and attracting fixed point is $d'$. 
 
From above we obtain
$$
L(E^{t_n\beta_n}([a,b]\times [c,d]))\geq L(E^{t_n\beta_1^n}([a',b]\times [c',d]))\geq L([b',T(b)]\times [d',d])
$$
and Lemma \ref{lem:estimate_L_box_Epath} gives
$$
L([b',T(b)]\times [d',d])\geq t_n\beta_1^n([a',b]\times [c',d])+\log\frac{D^2}{4}
$$
where $D$ is the distance between geodesics $l(b',d)$ and $l(b,d')$. The above choice of $b'$, $d'$ and  $\epsilon >0$ gives, for all $n\geq n_0(\epsilon )$,
\begin{equation}
\label{eq:l-ineq1}
L(E^{t_n\beta_n}([a,b]\times [c,d]))\geq t_n(\beta_n ([a,b]\times [c,d])-4\epsilon) +\log\frac{D^2}{4}.
\end{equation}

\vskip .2 cm

\noindent {\bf Case 2.} Assume that $\beta^n_1$ is trivial and that either $\beta^n_2$ or $\beta^n_3$ is non-trivial. In this case no geodesic of the support of $\beta^n_4$ belongs to $[d,a]\times [b,c]$. We normalize the earthquakes $E^{t_n\beta_n}$ and $E^{t_n\beta^n_i}$ for $i=1,2,3,4$ as in the previous case. Note that $E^{t_n\beta^n_1}=id$. As in the previous case, all earthquakes $E^{t_n\beta^n_i}$ for $i=2,3,4$ can only increase the Liouville measure of $[a',b]\times [c',d]$. Then we have
\begin{equation}
\label{eq:l-ineq2}
L(E^{t_n\beta_n}([a,b]\times [c,d]))\geq \log\frac{D^2}{4}\geq
t_n(\beta_n([a,b]\times [c,d])-4\epsilon)+\log\frac{D^2}{4},
\end{equation} 
since $\beta^n_1([a,b]\times [c,d])=0$ and $\beta_n([a,b]\times [c,d])\leq 4\epsilon$, where $D$ is the distance between geodesics $l(b',d)$ and $l(b,d')$.

\vskip .2 cm

\noindent {\bf Case 3.} Assume that $\beta^n_i$ for $i=1,2,3$ are trivial. Then $\beta_n([a,b]\times [c,d])=0$ and
\begin{equation}
\label{eq:l-ineq3}
L(E^{t_n\beta_n}([a,b]\times [c,d]))\geq 0=
t_n(\beta_n([a,b]\times [c,d]).
\end{equation} 

\vskip .2 cm

By dividing each inequality (\ref{eq:l-ineq1}), (\ref{eq:l-ineq2}) and (\ref{eq:l-ineq3}) with $t_n$ and letting $n\to\infty$ together with the fact that $\epsilon$ was arbitrary, we get
$$
\liminf_{n\to\infty}\frac{1}{t_n}L(E^{t_n\beta_n}([a,b]\times [c,d]))\geq\beta ([a,b]\times [c,d])).
$$

\end{proof}

\section{Convergence of earthquake paths in Thurston's closure}

We first prove that each box of geodesics $Q=[a,b]\times [c,d]$ is the limit (in the Hausdorf topology) of a sequence of increasing (in the sense of inclusions) boxes $Q_n$ with $\beta (\partial Q_n)=0$. Indeed, $\partial Q=(\{ a\}\times [c,d])\cup (\{b\}\times [c,d])\cup ([a,b]\times \{ c\})\cup( [a,b]\times\{ d\})$. 
Consider a small open interval $I_a\subset S^1$ around $a$. Since $\beta$ is locally finite, 
there exists at most countably many $a'\in I_a$ such that $\beta (\{ a'\}\times [c,d])>0$. Choose $a_n\in I_a\cap (a,d]$ such that $\beta (\{ a_n\}\times [c,d])=0$. Similarly we choose $b_n$ close to $b$ such that $\beta (\{ b_n\}\times [c,d])=0$. In the same fashion, we choose $c_n$ close to $c$ and $d_n$ close to $d$ such that 
$$
\beta (\partial ([a_n,b_n]\times [c_n,d_n]))=0
$$
and set $Q_n=[a_n,b_n]\times [c_n,d_n]$.

Next we prove the convergence of the earthquake paths in Thurston's boundary which establish Theorem 2 in Introduction.

\begin{theorem}
\label{thm:earthquake_path_conv}
Let $\beta\in ML_{bdd}(\mathbb{H})$ and let $E^{t\beta}$, for $t>0$, be a left earthquake with an earthquake measure $t\beta$. Then
$$
\frac{1}{t}(E^{t\beta}|_{S^1})^{*}L\to\beta
$$
as $t\to\infty$ in the uniform weak* topology on $\mathcal{M}(G(\mathbb{H}))$.
\end{theorem}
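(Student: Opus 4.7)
The plan is to argue by contradiction and reduce to a single application of the Fundamental Lemma (Lemma \ref{lem:fund}) after absorbing an arbitrary isometry into the earthquake measure.

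Suppose the claim fails. Then there exist a continuous $f:G(\mathbb{H})\to\mathbb{R}$ with compact support, a constant $\epsilon>0$, and sequences $t_n\to\infty$ and $\gamma_n\in Isom(\mathbb{H})$ such that
$$
\left|\int_{G(\mathbb{H})}f\,d\Bigl[\gamma_n^{*}\Bigl(\tfrac{1}{t_n}(E^{t_n\beta}|_{S^1})^{*}L-\beta\Bigr)\Bigr]\right|\ge\epsilon
$$
for every $n$. Since $L$ is invariant under $Isom(\mathbb{H})$, a direct change of variables shows $\gamma_n^{*}((E^{t_n\beta}|_{S^1})^{*}L)=(E^{t_n\beta_n}|_{S^1})^{*}L$ where $\beta_n:=\gamma_n^{*}\beta$, because $\gamma_n^{-1}\circ E^{t_n\beta}\circ\gamma_n$ is an earthquake with measure $t_n\beta_n$ (uniqueness of the earthquake up to post-composition by an isometry is harmless for pullbacks of $L$). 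Thus the bad inequality reads
$$
\left|\int_{G(\mathbb{H})}f\,d\Bigl[\tfrac{1}{t_n}(E^{t_n\beta_n}|_{S^1})^{*}L-\beta_n\Bigr]\right|\ge\epsilon.
$$

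Next I would use that Thurston's norm is isometry invariant, so $\|\beta_n\|_{Th}=\|\beta\|_{Th}$ uniformly in $n$. In particular the local mass of $\beta_n$ is uniformly bounded on every compact subset of $G(\mathbb{H})$, so Banach--Alaoglu (or a standard diagonal extraction) lets me pass to a subsequence with $\beta_n\to\beta_\infty$ in the weak* topology, and the limit $\beta_\infty$ is a measured lamination in $ML_{bdd}(\mathbb{H})$ with $\|\beta_\infty\|_{Th}\le\|\beta\|_{Th}$. Fixing this subsequence, Lemma \ref{lem:fund} applies and yields
$$
\tfrac{1}{t_n}L\bigl(E^{t_n\beta_n}(Q)\bigr)\longrightarrow\beta_\infty(Q)
$$
for every box of geodesics $Q$ satisfying $\beta_\infty(\partial Q)=0$.

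To pass from box convergence to the weak* convergence of the measures $\tfrac{1}{t_n}(E^{t_n\beta_n}|_{S^1})^{*}L$ against $f$, I would invoke the approximation argument already carried out at the start of Section 7: any box can be exhausted by boxes whose boundary is $\beta_\infty$-null, and the continuous compactly supported function $f$ can be uniformly approximated by simple functions built from such boxes, exactly as in the continuity half of the proof of Theorem \ref{thm:embedding}. This gives
$$
\int_{G(\mathbb{H})}f\,d\Bigl[\tfrac{1}{t_n}(E^{t_n\beta_n}|_{S^1})^{*}L\Bigr]\longrightarrow\int_{G(\mathbb{H})}f\,d\beta_\infty,
$$
while $\int f\,d\beta_n\to\int f\,d\beta_\infty$ by the weak* convergence $\beta_n\to\beta_\infty$. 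Subtracting contradicts the standing lower bound $\epsilon$, completing the proof.

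The main obstacle is the second step: making the reduction $\gamma_n^{*}(h^{*}L)=(E^{t_n\beta_n}|_{S^1})^{*}L$ genuinely clean (including checking that the normalization ambiguity of $E^{t_n\beta_n}$ does not affect the pullback of $L$, since post-composition by an isometry leaves $L$ invariant) and verifying that Lemma \ref{lem:fund} is robust enough to drive weak* convergence of $\tfrac{1}{t_n}(E^{t_n\beta_n}|_{S^1})^{*}L$ against arbitrary test functions $f$, not merely against indicators of good boxes. The latter is handled by the box-approximation technique already present in the paper, but it must be checked uniformly in $n$ so that the leftover contributions near $\partial Q$ vanish in the limit.
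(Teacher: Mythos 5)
Your proposal is correct and follows essentially the same route as the paper: argue by contradiction, pull everything back by the offending isometries $\gamma_n$ (so that $\gamma_n^{*}\alpha_{t_n}=(E^{t_n\gamma_n^{*}\beta}|_{S^1})^{*}L$ with $\beta_n=\gamma_n^{*}\beta$ uniformly bounded in Thurston's norm), extract a weak* limit of $\beta_n$, and apply Lemma \ref{lem:fund}. The only cosmetic difference is that the paper also extracts a weak* subsequential limit $\alpha^{\#}$ of $\frac{1}{t_n}\gamma_n^{*}\alpha_{t_n}$ (justified by the uniform box bound coming from Lemma \ref{lem:estimate_L_box_Epath}) and identifies $\alpha^{\#}=\beta^{\#}$ on the dense family of boxes with null boundary, rather than running your box-approximation of $f$ directly.
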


\begin{proof}
Without loss of generality we can assume that $\|\beta\|_{Th}=1$. Let $h_t=E^{t\beta}|_{S^1}$, for $t>0$, be the restriction of earthquake path $E^{t\beta}$ to the boundary $S^1$ of $\mathbb{H}$. Let
$$
\alpha_t=(h_t)^{*}L
$$
be the image of $h_t\in T(\mathbb{H})$ in $\mathcal{M}(G(\mathbb{H}))$. 

Assume on the contrary that $\frac{1}{t}\alpha_t$ does not converge to $\beta$ in the uniform weak* topology as $t\to\infty$. 
Then there exists a continuous function $f:G(\mathbb{H})\to\mathbb{R}$ with compact support, a sequence of isometries $\gamma_n\in Mob(\mathbb{H})$, and a sequence $t_n\to \infty$ as $n\to\infty$ such that, for all $n\in\mathbb{N}$,
\begin{equation}
\label{eq:contr_nonconv}
\Big{|}\int_{G(\mathbb{H})}f d\Big{[}\Big{(}\gamma_{n}\Big{)}^{*}\Big{(}\frac{1}{t_n}\alpha_{t_n}-\beta \Big{)}\Big{]}\Big{|}\geq C_0>0.
\end{equation}

Define
$$
\alpha_{t_n}'=(\gamma_{n})^{*}\alpha_{t_n}
$$
and
$$
\beta_n=(\gamma_{n})^{*}\beta .
$$

Let $Q=[a,b]\times [c,d]$ be an arbitrary box of geodesics.
By Lemma \ref{lem:estimate_L_box_Epath}
\begin{equation*}
\frac{1}{t_n}\alpha_{t_n}'(Q)\leq \beta_n (Q)+\frac{1}{t_n}L(Q)\leq \Big{(}\frac{L(Q)}{\log 2}+1\Big{)}
\|\beta\|_{Th}+\frac{1}{t_n}L(Q)= C(Q)
\end{equation*}
for all $n$ such that $t_n\geq 1$. Also 
\begin{equation*}
\beta_n(Q)\leq \Big{(}\frac{L(Q)}{\log 2}+1\Big{)}\|\beta\|_{Th}
\end{equation*}
for all $n$.

The above two inequalities imply that both $\beta_n$ and $\frac{1}{t_n}\alpha_{t_n}'$ are 
uniformly bounded on each box $Q$. Then there exist subsequences $\frac{1}{t_{n_k}}
\alpha_{t_{n_k}}'$ and $\beta_{n_k}$ that converge in the weak* topology on $\mathcal{M}(G(\mathbb{H}))$ to $\alpha^{\#}$ and $
\beta^{\#}$, respectively, as $k\to\infty$.

Then (\ref{eq:contr_nonconv}) gives
\begin{equation}
\label{eq:not_equal_m1}
\Big{|}\int_{G(\mathbb{H})}fd(\alpha^{\#}-\beta^{\#})\Big{|}\geq C_0.
\end{equation}

On the other hand, Lemma \ref{lem:fund} implies that $\alpha^{\#}$ and $\beta^{\#}$ agree on all boxes $Q^{\#}$ with $\beta^{\#}(\partial Q^{\#})=0$. These boxes are dense among all boxes in $G(\mathbb{H})$ and $\alpha^{\#}=\beta^{\#}$ contradicting (\ref{eq:not_equal_m1}). The contradiction proves theorem.
\end{proof}

\vskip .2 cm

The above theorem proves that Thurston's boundary contains the space of  projective bounded measured laminations. It remains to prove the opposite.

\vskip .2 cm

\begin{proposition}
A limit point of $P(\mathcal{L}(T(\mathbb{H})))$ in $P\mathcal{M}(G(\mathbb{H}))$ is necessarily a projective bounded measured lamination. 
\end{proposition}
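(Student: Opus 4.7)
The plan is to identify $[\alpha] \in PML_{bdd}(\mathbb{H})$ for any limit point $[\alpha]$ of $P\mathcal{L}(T(\mathbb{H}))$ not already in $P\mathcal{L}(T(\mathbb{H}))$. Choose $h_n \in T(\mathbb{H})$ with $[\alpha_n] := [\mathcal{L}(h_n)] \to [\alpha]$ in the uniform weak* projective topology, and select scalars $c_n > 0$ so that $c_n\alpha_n \to \alpha$ in the uniform weak* topology. Writing $\mu_n = EM(h_n)$ and $t_n = \|\mu_n\|_{Th}$, the first reduction is to show $t_n \to \infty$ along a subsequence: if $t_n$ stayed bounded, then $\mu_n$ would be bounded in $ML_{bdd}(\mathbb{H})$, and via the homeomorphism of \cite{MS} combined with the closedness of $\mathcal{L}(T(\mathbb{H}))$ from Theorem~\ref{thm:embedding}, $[\alpha]$ would lie in $P\mathcal{L}(T(\mathbb{H}))$, contradicting the assumption. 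Normalize $\tilde\mu_n = \mu_n/t_n$ to have unit Thurston norm (and hence uniformly bounded supremum norm), and pass to a subsequence with $\tilde\mu_n \to \tilde\mu$ weak*, where $\tilde\mu \in ML_{bdd}(\mathbb{H})$ is possibly zero. Lemma~\ref{lem:fund}, applied to $(\tilde\mu_n, t_n)$, yields $\frac{1}{t_n}\alpha_n(Q) \to \tilde\mu(Q)$ on boxes $Q$ with $\tilde\mu(\partial Q) = 0$; combined with uniform boundedness of $\frac{1}{t_n}\alpha_n$ in supremum norm, this upgrades to weak* convergence $\frac{1}{t_n}\alpha_n \to \tilde\mu$ in $\mathcal{M}(G(\mathbb{H}))$.

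The crux of the argument, and what I expect to be the main obstacle, is to rule out the mass-escape scenario $\tilde\mu = 0$; for this I will combine the uniform weak* hypothesis with a conjugation trick. Since $\|\tilde\mu_n\|_{Th} = 1$, there exist unit arcs $I_n \subset \mathbb{H}$ with $\tilde\mu_n(I_n) \geq \frac{1}{2}$; choose isometries $\gamma_n \in Isom(\mathbb{H})$ sending a fixed unit arc $I_0$ to $I_n$, and set $\nu_n = \gamma_n^*\tilde\mu_n$. Then $\nu_n(I_0) \geq \frac{1}{2}$, so after passing to a subsequence $\nu_n \to \nu$ weak* with $\nu \in ML_{bdd}(\mathbb{H})$ and $\nu \neq 0$. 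Applying Lemma~\ref{lem:fund} in the conjugated frame yields $\frac{1}{t_n}\gamma_n^*\alpha_n \to \nu$ weak*, and evaluating the uniform weak* convergence $c_n\alpha_n \to \alpha$ at the specific sequence $\gamma_n$ gives $c_n\gamma_n^*\alpha_n - \gamma_n^*\alpha \to 0$ weakly. Picking a continuous compactly supported $f$ with $\int f\, d\nu > 0$, the uniform bound $|\int f\, d[\gamma_n^*\alpha]| \leq \|f\|_{\infty}\|\alpha\|_{\sup} N(\mathrm{supp}\,f) < \infty$ --- a consequence of isometry invariance of the Liouville measure and the finite supremum norm of $\alpha$ --- forces $c_n t_n$ to be bounded. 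But then, assuming $\tilde\mu = 0$, the original frame gives $c_n\alpha_n = (c_n t_n) \cdot \frac{1}{t_n}\alpha_n \to 0$ weak*, contradicting $c_n\alpha_n \to \alpha \neq 0$. Hence $\tilde\mu \neq 0$.

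With $\tilde\mu \neq 0$ in hand, pick $f$ continuous compactly supported with $\int f\, d\tilde\mu > 0$. Then $c_n t_n = \int f\, d[c_n\alpha_n] / \int f\, d[\frac{1}{t_n}\alpha_n]$ converges to $k := \int f\, d\alpha / \int f\, d\tilde\mu \in [0,\infty)$, and $k > 0$ because $k = 0$ would yield $c_n\alpha_n \to 0$ contradicting $\alpha \neq 0$. Passing to the limit in $c_n\alpha_n = (c_n t_n) \cdot \frac{1}{t_n}\alpha_n$ yields $\alpha = k\tilde\mu$, so $[\alpha] = [\tilde\mu] \in PML_{bdd}(\mathbb{H})$, as required.
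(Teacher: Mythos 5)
Your argument is correct, but it takes a genuinely different route from the paper. The paper's proof is short and makes no use of earthquakes at all: it relies on the cross-ratio identity $e^{-\alpha(Q)}+e^{-\alpha(Q^{*})}=1$ satisfied by every $\alpha\in\mathcal{L}(T(\mathbb{H}))$ on complementary boxes $Q=[a,b]\times[c,d]$, $Q^{*}=[b,c]\times[d,a]$ (cf. \cite{Bo}). If the rescaled limit had two transverse geodesics in its support, one produces two boxes whose $\alpha_k$-masses both blow up while one box sits inside the complementary box of the other, contradicting the identity; hence the support is a geodesic lamination, and boundedness is inherited from $\|\cdot\|_{\sup}$. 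Your proof instead runs Theorem \ref{thm:earthquake_path_conv} in reverse: you pass through the earthquake measure map, normalize $\tilde\mu_n=\mu_n/\|\mu_n\|_{Th}$, and invoke Lemma \ref{lem:fund} to identify the limit as a multiple of the weak* limit $\tilde\mu$ of the normalized earthquake measures. This is heavier (it needs the earthquake machinery, the homeomorphism of \cite{MS}, and the standard fact that a weak* limit of uniformly bounded measured laminations is a measured lamination), but it buys more: it identifies \emph{which} projective lamination the boundary point is, namely the projective class of the limiting normalized earthquake measure. The genuinely nontrivial extra step your route requires --- ruling out total mass escape $\tilde\mu=0$ --- is handled correctly by your conjugation trick, and it is worth noting that this is precisely where the uniformity in the uniform weak* topology (the supremum over all $\gamma\in Isom(\mathbb{H})$, together with the bound $|\int f\,d[\gamma_n^{*}\alpha]|\leq \|f\|_{\infty}\|\alpha\|_{\sup}N$) is indispensable; plain weak* convergence would not suffice there. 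A few steps are stated at the level of ``standard details'' (that bounded Thurston norm forces $c_n$ to stay in a compact subset of $(0,\infty)$ so that closedness of $\mathcal{L}(T(\mathbb{H}))$ from Theorem \ref{thm:embedding} applies, and that convergence on boxes with null boundary plus uniform local bounds upgrades to weak* convergence), but these are all fillable and do not constitute gaps.
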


\begin{proof}
Let $\beta$ be the limit point of a sequence $[\alpha_k]\in P(\mathcal{L}(T(\mathbb{H})))$,
where $[\alpha_k]$ is the projective class of $\alpha_k\in \mathcal{L}(T(\mathbb{H}))$.  
Then there exists $t_k\to \infty$ as $k\to \infty$ such that
$$
\frac{1}{t_k}\alpha_k\to \beta
$$
as $k\to\infty$ in the uniform weak* topology. 

Recall that $\alpha\in \mathcal{L}(T(\mathbb{H}))$ implies that 
$$
e^{-\alpha ([a,b]\times [c,d])}+e^{-\alpha ([b,c]\times [d,a])}=1
$$
for all boxes $[a,b]\times [c,d]$ of $G(\mathbb{H})$ (cf. Bonahon \cite{Bo}). 
This implies that if $\alpha_k ([a,b]\times [c,d])\to\infty$ then $\alpha_k 
([b,c]\times [d,a])\to 0$ as $k\to\infty$. 

Assume that the support of $\beta$ contains two intersecting geodesics $(m,n)\in G(\mathbb{H})$ and $(p,q)\in G(\mathbb{H})$. The geodesic $(m,n)\in G(\mathbb{H})$ separates $p$ and $q$. 
There exists a box of geodesics $Q_{(m,n)}=[a_1,b_1]\times [c_1,d_1]$ containing $(m,n)$ and a box of geodesics 
$Q_{(p,q)}=[a_2,b_2]\times [c_2,d_2]$ containing $(p,q)$ such that $[a_1,b_1]\subset (b_2,c_2)$ and $[c_1,d_1]\subset (d_2,a_2)$. Namely, every geodesic of $Q_{(m,n)}$ intersects every geodesic of $Q_{(p,q)}$.

Since $t_k\to\infty$ as $k\to\infty$ and both geodesics $(m,n)$ and $(p,q)$ are in the support of $\beta$, we have that $\alpha_k(Q_{(m,n)})\to\infty$ and $\alpha_k(Q_{(p,q)})\to\infty$ as $k\to\infty$. The boxes are chosen such that $Q_{(m,n)}\subset [b_2,c_2]\times [d_2,a_2]$. This implies $\alpha_k([b_2,c_2]\times [d_2,a_2])\to\infty$ as $k\to\infty$ which is in a contradiction with $\alpha_k(Q_{(p,q)})\to\infty$. Thus the geodesics of the support of $\beta$ do not intersect. Therefore $\beta$ is a measured lamination. Boundedness of $\beta$ follows because $\mathcal{L}(G(\mathbb{H}))$ consists of bounded measures.
\end{proof}

The proof of Theorem 1 and 2 from Introduction is now completed.

\section{Quasiconformal Mapping Class group}

The quasiconformal mapping class group $MCG_{qc}(X)$ of a hyperbolic surface $X$ consists of all quasiconformal maps $g:X\to X$ modulo homotopies bounded in the hyperbolic geometry (cf. \cite{GL}). The natural action of $MCG_{qc}(X)$ onto $T(X)$ given by $[f]\mapsto [f\circ g^{-1}]$ is continuous (cf. \cite{GL}). 

Let $X=\mathbb{H}/\Gamma$, where $\Gamma$ is a Fuchsian group. Then the Teichm\"uller space $T(X)$ is identified with the space of all quasisymmetric maps of $S^1$ that fix $1$, $i$ and $-1$, and that conjugate $\Gamma$ to another Fuchsian group. The quasiconformal mapping class group $MCG_{qc}(X)$ is identified with the group of quasisymmetric maps of $S^1$ that conjugate $\Gamma$ onto itself. 

For the universal Teichm\"uller space 
\begin{equation*}
T(\mathbb{H})=\{ h:S^1\to S^1|h \mbox{ is quasisymmetric and fixes }1,\ i,\mbox{ and }-1\}, 
\end{equation*}
the mapping class group is given by
 \begin{equation*}
 MCG_{qc}(\mathbb{H})=\{ g:S^1\to S^1|g \mbox{ is quasisymmetric}\}.
\end{equation*}
The action of $g\in MCG_{qc}(\mathbb{H})$ is given by
$$
h\mapsto \gamma\circ h\circ g^{-1},
$$
where $\gamma\in Mob(S^1)$ such that $\gamma\circ h\circ g^{-1}$ fixes $1$, $i$ and $-1$. 
We prove that the action extends continuously to $T(\mathbb{H})\cup PML_{bdd}(\mathbb{H})$ which is Theorem 3 in Introduction. As before, the proof for the universal Teichm\"uller space extends to all Teichm\"uller spaces by the invariance under Fuchsian groups. 

\begin{theorem}
\label{thm:extension_of_mcg_action}
The action of $MCG_{qc}(\mathbb{H})$ on $T(\mathbb{H})$ extends to a continuous action on Thurston's closure $T(\mathbb{H})\cup PML_{bdd}(\mathbb{H})$.
\end{theorem}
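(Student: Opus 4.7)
The plan is to extend the $MCG_{qc}(\mathbb{H})$-action to $PML_{bdd}(\mathbb{H})$ by pushforward on geodesic currents, and to transport continuity across the Liouville embedding using the uniform weak$^*$ topology.

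First I would define the action on currents: for $g\in MCG_{qc}(\mathbb{H})$ and $\alpha\in\mathcal{M}(G(\mathbb{H}))$, set $(g_\bullet\alpha)(A):=\alpha(g^{-1}(A))$, where $g$ acts diagonally on $G(\mathbb{H})=S^1\times S^1-diag$. Mobius invariance of $L$ yields
$$\mathcal{L}(\gamma\circ h\circ g^{-1})=g_\bullet\mathcal{L}(h)$$
independently of the Mobius normalizer $\gamma$, so the Liouville map intertwines the Teichm\"uller action with the pushforward. A quasisymmetric $g$ sends each box of geodesics to another box and distorts the defining cross-ratios by a bounded factor depending only on its quasisymmetric constant $M_g$; combined with the additivity of $L$ under box subdivision, this implies that for every box $Q$ with $L(Q)=\log 2$ the preimage $g^{-1}Q$ is a box of $L$-measure at most $C(M_g)$, which decomposes into at most $N(M_g)$ sub-boxes of $L$-measure $\log 2$. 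It follows that $g_\bullet$ preserves $\mathcal{M}(G(\mathbb{H}))$. Since $g$ also preserves non-linking of endpoint pairs on $S^1$, the image $g(|\beta|)$ of a geodesic lamination is a geodesic lamination, and the same distortion argument shows $g_\bullet$ preserves $ML_{bdd}(\mathbb{H})$; the induced map $g\cdot[\beta]:=[g_\bullet\beta]$ extends the $MCG_{qc}(\mathbb{H})$-action to $PML_{bdd}(\mathbb{H})$.

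The main obstacle is continuity of $g_\bullet$ in the uniform weak$^*$ topology. My plan is to use the box-characterization of this topology extracted from the proof of Theorem \ref{thm:embedding}: for uniformly bounded sequences, $\alpha_n\to\alpha$ in uniform weak$^*$ is equivalent to $\sup_{L(Q)=\log 2}|\alpha_n(Q)-\alpha(Q)|\to 0$. Granting this equivalence, the identity $\sup_{L(Q)=\log 2}|g_\bullet\alpha_n(Q)-g_\bullet\alpha(Q)|=\sup_{L(Q)=\log 2}|\alpha_n(g^{-1}Q)-\alpha(g^{-1}Q)|$ combined with the subdivision bound above controls the right-hand side by $N(M_g)\sup_{L(Q')\leq\log 2}|\alpha_n(Q')-\alpha(Q')|$, so continuity of $g_\bullet$ follows. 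The technical subtlety is that the equivalence in Theorem \ref{thm:embedding} is stated for sequences in $\mathcal{L}(T(\mathbb{H}))$; one must verify the same simple function approximation argument (controlling the test function on each piece by the $\log 2$-box mass and using the uniform boundedness of $\alpha_n+\alpha$ on compact sets) applies to the bounded sequences and measured-lamination limits arising in Thurston's closure.

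Finally, continuity on $T(\mathbb{H})\cup PML_{bdd}(\mathbb{H})$ for each fixed $g$ follows by combining three cases. For $h_n\to h$ in $T(\mathbb{H})$, continuity is the standard fact that $MCG_{qc}(\mathbb{H})$ acts continuously in the Teichm\"uller metric. For $[\beta_n]\to[\beta]$ in $PML_{bdd}(\mathbb{H})$ with representatives $s_n\beta_n\to\beta$ uniformly weakly$^*$, continuity of $g_\bullet$ gives $s_n g_\bullet\beta_n\to g_\bullet\beta$, hence $[g_\bullet\beta_n]\to[g_\bullet\beta]$. For $h_n\in T(\mathbb{H})$ converging to a boundary point $[\beta]$, so that $\frac{1}{t_n}\mathcal{L}(h_n)\to\beta$ in uniform weak$^*$ with $t_n\to\infty$, the intertwining $\mathcal{L}(g\cdot h_n)=g_\bullet\mathcal{L}(h_n)$ combined with continuity of $g_\bullet$ yields $\frac{1}{t_n}\mathcal{L}(g\cdot h_n)\to g_\bullet\beta$, so $g\cdot h_n\to[g_\bullet\beta]$ in Thurston's closure. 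Since the preceding arguments respect any Fuchsian group conjugation imposed by $X_0$, this completes the extension for general surfaces.
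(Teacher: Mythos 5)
Your setup is sound and matches the paper's: the action on currents is the pushforward $g_\bullet$, M\"obius invariance of $L$ gives the intertwining $\mathcal{L}(\gamma\circ h\circ g^{-1})=g_\bullet\mathcal{L}(h)$, quasisymmetric distortion of cross-ratios shows $g_\bullet$ preserves $\mathcal{M}(G(\mathbb{H}))$ and $ML_{bdd}(\mathbb{H})$, and everything reduces to continuity of $g_\bullet$ for the uniform weak* topology. The gap is in how you prove that continuity. You route it through the claim that, for uniformly bounded sequences, uniform weak* convergence is \emph{equivalent} to $\sup_{L(Q)=\log 2}|\alpha_n(Q)-\alpha(Q)|\to 0$. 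The direction you actually need --- uniform weak* convergence implies the box supremum tends to $0$ --- fails precisely in the situation Theorem 3 is about, namely $\frac{1}{t_n}\mathcal{L}(h_n)\to\beta$ with $\beta\in ML_{bdd}(\mathbb{H})$. Take $\beta$ a unit atom on a single geodesic $l=(p,q)$, so that $E^{t\beta}$ is a hyperbolic translation of length $t$ on one side of $l$ and $\frac{1}{t}(E^{t\beta}|_{S^1})^{*}L\to\beta$ by Theorem \ref{thm:earthquake_path_conv}. For boxes $Q$ with $L(Q)=\log 2$ whose corner approaches $(p,q)$ without containing it, one has $\beta(Q)=0$ while $L(E^{t\beta}(Q))$ is as close to $t+\log\frac{D^2}{4}$ as one likes (Lemma \ref{lem:estimate_L_box_Epath} plus continuity in the corners), so $\sup_{L(Q)=\log 2}|\frac{1}{t}\alpha_t(Q)-\beta(Q)|\geq 1/2$ for all large $t$. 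This is exactly why Lemma \ref{lem:fund} carries the hypothesis $\beta(\partial Q)=0$ and why Theorem \ref{thm:earthquake_path_conv} is proved using only a dense family of such boxes. Consequently your estimate $\sup_{L(Q)=\log 2}|g_\bullet\mu_n(Q)-g_\bullet\mu(Q)|\leq N(M_g)\sup_{L(Q')\leq\log 2}|\mu_n(Q')-\mu(Q')|$ majorizes by a quantity that does not tend to zero, and the argument stalls at the only nontrivial case (the interior case is already covered by continuity of the action in the Teichm\"uller metric).

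The paper avoids box masses entirely: it works with the test-function pairing directly, rewriting $\int_{G(\mathbb{H})} f\,d[\gamma^{*}(g_\bullet\nu)]$ as an integral of a composed test function against $\nu$ and taking the supremum there. If you want to argue along those lines, the point that still needs care (and which the paper treats tersely) is that the composed test functions $f\circ\gamma^{-1}\circ g$ are not M\"obius translates of the single function $f$; one should renormalize by a second M\"obius map and use that the resulting family of $M_g$-quasisymmetric maps, hence of renormalized test functions, is precompact and equicontinuous with supports in a fixed compact set, so that uniform weak* convergence against one $f$ upgrades to uniform convergence against the whole family. Either way, the box-supremum characterization is not a usable intermediary at the boundary.
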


\begin{proof}
Assume that $h_n\to [\beta ]\in PML_{bdd}(\mathbb{H})$. Namely, if $\alpha_n=(h_n)^{*}(L)$ then for any continuous $f:G(\mathbb{H})\to\mathbb{R}$ with compact support and some $\beta_1\in [\beta ]$, 
$$
\sup_{\gamma\in Mob(S^1)}\Big{|}\int_{G(\mathbb{H})} fd\gamma^{*}(\alpha_n-\beta_1 )\Big{|}\to 0
$$
as $n\to\infty$. 

The action $h_n\mapsto \gamma_n\circ h_n\circ g^{-1}$
for appropriate $\gamma_n\in Mob(S^1)$, 
gives 
$$
g^{*}(\alpha_n)=\alpha_n':=(\gamma_n\circ h_n\circ g^{-1})^{*}(L)=\alpha_n\circ g^{-1}
$$
and 
$$g^{*}(\beta_1 ):=\beta_1\circ g^{-1}.
$$
We have
\begin{equation*}
\begin{array}l
\int_{G(\mathbb{H})}f(x)d\alpha_n'(x)=
\int_{G(\mathbb{H})}f(x)d\alpha_n(h^{-1}(x))\\
=\int_{G(\mathbb{H})}f\circ h(h^{-1}(x))d\alpha_n(h^{-1}(x))=\int_{G(\mathbb{H})}f(y)d\alpha_n(y)
\end{array}
\end{equation*}
and then
\begin{equation*}
\begin{array}l
\sup_{\gamma\in Mob(S^1)}\Big{|}\int_{G(\mathbb{H})} f(x)d\gamma^{*}(\alpha_n'-\beta_1\circ g^{-1})(x)\Big{|}=\\
\sup_{\gamma\in Mob(S^1)}\Big{|}\int_{G(\mathbb{H})} f(y)d\gamma^{*}(\alpha_n-\beta_1)(y)\Big{|}\to 0
\end{array}
\end{equation*}
as $n\to\infty$. Thus the action of $MCG_{qc}(\mathbb{H})$ extends to a continuous function on Thurston's closure.
\end{proof}

\section{Appendix}

The results in this section are used in the proof of  Lemma \ref{lem:fund} in \S 6.
We prove a standard lemma regarding neighborhoods in $T(\mathbb{H})$ and 
Liouville measure of boxes of geodesics under the maps in given neighborhoods. 

\begin{lemma}
\label{lem:LMeas_Teich_neigh}
Let $h_0\in T(\mathbb{H})$. Given $\epsilon >0$ and $0<\delta <\log 2$, there exists an open neighborhood $N(h_0,\delta ,\epsilon )$ of $h_0$ in $T(\mathbb{H})$ such that for each box of geodesics $Q$ with
$$
\delta \leq L(Q)\leq\log 2
$$
we have
$$
|\alpha_0(Q)-\alpha (Q)|<\epsilon
$$
where $\alpha_0=(h_0)^{*}L$ and $\alpha =h^{*}L$, for any $h\in N(h_0,\delta ,\epsilon )$.
\end{lemma}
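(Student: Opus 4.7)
The plan is to reduce the lemma to the case $h_0 = \mathrm{id}$, use the $Mob(S^1)$-invariance of the Liouville measure to reparametrize all relevant boxes as a one-parameter compact family of normalized boxes, and then apply the standard fact that quasisymmetric self-maps of $S^1$ with quasisymmetric constant close to $1$ and fixing three points are uniformly close to the identity.

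First set $f = h \circ h_0^{-1}$ and $Q_0 = h_0(Q)$, so that
\[
|\alpha(Q) - \alpha_0(Q)| = |L(h(Q)) - L(h_0(Q))| = |L(f(Q_0)) - L(Q_0)|.
\]
Since $h_0\in T(\mathbb{H})$ is a fixed quasisymmetric map, its distortion of cross-ratios is controlled by its quasisymmetric constant; hence as $Q$ ranges over boxes with $L(Q) \in [\delta,\log 2]$ the value $L(Q_0)$ stays in some compact interval $[\delta',\delta''] \subset (0,\infty)$ depending only on $h_0$ and $\delta$. Moreover, an open Teichm\"uller neighborhood of $h_0$ corresponds to an open Teichm\"uller neighborhood of $\mathrm{id}$ for $f$, so it is enough to prove the statement for $h_0 = \mathrm{id}$ on boxes $Q'$ with $L(Q') \in [\delta',\delta'']$.

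For each such $Q'$, pick $\gamma_{Q'} \in Mob(S^1)$ with $\gamma_{Q'}(Q') = B_r := [1,i] \times [-1, p(r)]$, where $r = L(Q')$ and $p(r) \in S^1$ depends continuously on $r$ and runs over a compact subset of the open arc from $-1$ to $1$ (through $-i$) as $r$ varies in $[\delta',\delta'']$. By $Mob$-invariance of $L$,
\[
L(f(Q')) - L(Q') = L(\tilde f(B_r)) - L(B_r), \qquad \tilde f := \gamma_{Q'} \circ f \circ \gamma_{Q'}^{-1},
\]
and $\tilde f$ has the same quasisymmetric constant as $f$. Post-composing with the unique $\sigma \in Mob(S^1)$ that makes $\hat f := \sigma \circ \tilde f$ fix $1, i, -1$ leaves Liouville values unchanged and produces $\hat f \in T(\mathbb{H})$ with the same quasisymmetric constant as $f$. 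Since $\hat f$ fixes $1, i, -1$, the box $\hat f(B_r)$ equals $[1,i]\times[-1,\hat f(p(r))]$, so
\[
|L(f(Q')) - L(Q')| = |F(\hat f(p(r))) - F(p(r))|,
\]
where $F(p) := L([1,i] \times [-1, p])$ is uniformly continuous on the compact arc containing $\{p(r) : r \in [\delta',\delta'']\}$.

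The main obstacle is to bound $|\hat f(p(r)) - p(r)|$ uniformly in $Q'$ by shrinking a Teichm\"uller neighborhood of $\mathrm{id}$. A small Teichm\"uller distance $d_T(f,\mathrm{id}) < \eta$ forces $f$ to have quasisymmetric constant at most $M(\eta)$ with $M(\eta) \to 1$ as $\eta \to 0$, and this constant is invariant under the $Mob$-conjugation $f \mapsto \tilde f$. Standard Mori / Ahlfors--Beurling theory then guarantees that quasisymmetric self-maps of $S^1$ with quasisymmetric constant at most $M$ that fix $1, i, -1$ form a normal family whose members tend uniformly to the identity as $M \to 1$. Choosing $\eta$ small enough that the resulting uniform bound on $|\hat f(z) - z|$ lies inside the $\epsilon$-modulus of uniform continuity of $F$ produces the required neighborhood $N(h_0,\delta,\epsilon)$.
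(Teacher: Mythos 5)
Your proof is correct, but it takes a genuinely different route from the paper's. The paper works directly with the conformal modulus $m(Q)$ of the quadrilateral determined by a box $Q$: it uses the two-sided bound $\frac{1}{K}m(Q)\leq m(f_0(Q))\leq Km(Q)$ for a $K$-quasiconformal extension $f_0$ of $h_0$ to confine $m(h_0(Q))$ to a compact interval $[\frac{1}{C_1},C_1]$ for all $Q$ with $\delta\leq L(Q)\leq\log 2$, then applies the same quasi-invariance to $h\circ h_0^{-1}$ to get $|m(h_0(Q))-m(h(Q))|\leq (K_1-1)m(h_0(Q))$, and concludes by the uniform continuity of $L$ as a function of $m$ on compact intervals. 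The uniformity over all admissible boxes is thus obtained from a single quantitative inequality, with no normalization of boxes needed. You instead reduce to $h_0=\mathrm{id}$, use M\"obius invariance of $L$ to push every box into the compact one-parameter family $B_r=[1,i]\times[-1,p(r)]$, and then invoke the soft compactness fact that normalized quasisymmetric maps with dilatation tending to $1$ converge uniformly to the identity; uniform continuity of $F(p)=L([1,i]\times[-1,p])$ on a compact subarc finishes the argument. Both arguments are sound; the paper's is more economical and quantitative, while yours isolates the geometric content (all boxes of a given Liouville mass are M\"obius-equivalent) at the cost of a normal-family step. One small point to tighten: the $M$-constant in the paper's pointwise quasisymmetry condition is not literally invariant under M\"obius conjugation $f\mapsto\gamma\circ f\circ\gamma^{-1}$; what is invariant is the minimal dilatation $K$ of a quasiconformal extension (since M\"obius maps are conformal), and since $K\to 1$ is equivalent to the quasisymmetry constant tending to $1$, your normal-family step should be phrased in terms of $K$.
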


\begin{proof}
Given a box of geodesics $Q=[a,b]\times [c,d]$, let $m(Q)$ denote the modulus of the quadrilateral with interior $\mathbb{H}$ whose $a$-sides are $[a,b],[c,d]\subset S^1$ and $b$-sides are $[b,c],[d,a]\subset S^1$. Then $m(Q)$ and $L(Q)$ are continuous functions of each other with $m(Q)=1$ if and only if $L(Q)=\log 2$. 

If $f_0:\mathbb{H}\to\mathbb{H}$ is a $K$-quasiconformal continuous extension of $h_0:S^1\to S^1$ then
$$
\frac{1}{K}m(Q)\leq m(f_0(Q))\leq K m(Q)
$$
for all quadrilaterals $Q$ with interior $\mathbb{H}$. 

If $\delta \leq L(Q)\leq\log 2$ then there exists $C=C(K,\delta )\geq 1$ such that
$$
\frac{1}{C}\leq L(h_0(Q))\leq C
$$
(by the continuous dependence of $L(Q)$ on $m(Q)$).

Furthermore, there exists $C_1=C_1(C)\geq 1$ such that
$$
\frac{1}{C_1}\leq m(h_0(Q))\leq C_1
$$
for all $Q$ with $\delta\leq L(Q)\leq\log 2$.

Let $h\in T(\mathbb{H})$ such that $h\circ h_0^{-1}$ has $K_1$-quasiconformal extension to $\mathbb{H}$. Then
$$
|m(h_0(Q))-m(h(Q))|\leq (K_1-1)m(h_0(Q)).
$$
By the uniform continuity of $L(Q)$ in $m(Q)$ when $m(Q)$ is in a compact 
interval $[\frac{1}{C_1},C_1]$, we obtain
$$
|L(h_0(Q))-L(h(Q))|\to 0
$$
as $K_1\to 1$.

Since $\alpha_0(Q)=L(h_0(Q))$ and $\alpha (Q)=L(h(Q))$, there exists a 
neighborhood $N(h_0,\delta ,\epsilon )$ of $h_0\in T(\mathbb{H})$ which 
satisfies the conclusions of the lemma.
\end{proof}

We consider the behavior of the Liouville measure of a box of geodesics under a simple (left) earthquake.

\begin{lemma}
\label{lem:simple_earthq_estimate}
Let $[a,b]\times [c,d]$ be a fixed box of geodesics and let $l$ be a geodesic with endpoint $x\in [d,b]$ and $y\in [b,d]$ with $d,x,y$ in the counterclockwise order. Let $E$ be an earthquakes with support $l$ and a fixed measure $m>0$. Define
$$
f(x,y)=L(E([a,b]\times [c,d])).
$$

Then $f(x,y)$ is increasing in $x\in [d,a]$ and decreasing in $x\in [a,b]$, for a fixed $y\in [b,d]$.

Moreover, $f(x,y)$ is increasing in $y\in[b,c]$ and decreasing in $y\in [c,d]$, for a fixed $x\in [d,b]$.
\end{lemma}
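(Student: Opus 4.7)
The plan is a direct computation, exploiting the piecewise Möbius nature of the earthquake $E$: it equals the identity on one stratum of $l$ and the hyperbolic translation $T$ along $l$ of length $m$ on the other. Combined with the cross-ratio formula for the Liouville measure of a box, this yields an explicit expression for $f(x,y)=L(E([a,b]\times[c,d]))$ whose monotonicity can be checked by differentiation.

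\textbf{Symmetry reduction.} The cyclic relabeling $a\leftrightarrow c$, $b\leftrightarrow d$ preserves the box $Q=[a,b]\times[c,d]$ as an unordered set of geodesics, preserves the earthquake $E$ (since $l$ is unchanged), and swaps the roles of $x$ and $y$: the arc $[d,b]$ (in the old labeling) becomes $[b,d]$ (in the new one), and conversely. Under this symmetry, the first claim exchanges with the third and the second with the fourth, so it suffices to prove monotonicity in $x$ with $y$ fixed. Within this reduction, a further case split is required according to whether $x\in[d,a]$ or $x\in[a,b]$, and similarly according to whether $y\in[b,c]$ or $y\in[c,d]$, since these determine which vertices of $Q$ lie on the moving side of $l$.

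\textbf{Explicit formula.} Introduce the Möbius coordinate $\sigma(z):=(z-x)/(z-y)$, under which $T$ is conjugate to multiplication by $\lambda:=e^{m}$ (with the sign of the exponent determined by the left-earthquake convention). Set $p=\sigma(a)$, $q=\sigma(b)$, $r=\sigma(c)$, $s=\sigma(d)$. In the case $x\in[d,a]$ with $y\in[b,c]$, the geodesic $l$ separates $\{a,b\}$ from $\{c,d\}$, so $E(Q)=[T(a),T(b)]\times[c,d]$; a direct calculation shows that the common denominators of $T(a)-c$, $T(a)-d$, $T(b)-c$, $T(b)-d$ cancel pairwise in the cross-ratio, giving the compact form
\[
e^{f(x,y)} \;=\; \frac{(\lambda p - r)(\lambda q - s)}{(\lambda p - s)(\lambda q - r)}.
\]
Analogous cross-ratio expressions hold in the remaining sub-cases, with $\lambda$ attached only to those $\sigma$-coordinates corresponding to moved vertices of $Q$.

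\textbf{Sign of the derivative and main obstacle.} Differentiate the formula above with respect to $x$, using $\partial\sigma(z)/\partial x=-1/(z-y)$; after grouping by the factors $1/(a-y)$, $1/(b-y)$, $1/(c-y)$, $1/(d-y)$, one obtains a sum of four rational terms whose signs must be combined to deduce the claim. This final sign verification is the main obstacle: the individual terms have mixed signs, and the claimed monotonicity reduces to a nontrivial polynomial inequality in $\lambda$ and the geometric parameters. As a consistency check and structural guide, $\partial f/\partial x$ vanishes identically when $\lambda=1$ (no earthquake), so one may extract a factor of $\lambda-1$ and then show the remaining factor has the correct sign for $\lambda>1$. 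The key tools for that sign analysis are the ordering $\lambda q<\lambda p<0<s<r$ (in the case at hand) and the structural identity $(a-x)+(y-a)=(b-x)+(y-b)=y-x$, which allows one to compare the $(y-a)(r-\lambda p)(s-\lambda p)$ and $(y-b)(r-\lambda q)(s-\lambda q)$ terms whose difference drives the sign.
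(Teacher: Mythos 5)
There is a genuine gap: the decisive step of the lemma --- verifying the sign of $\partial f/\partial x$ --- is not carried out. Your setup is sound and essentially the same as the paper's (write $E(Q)$ as a box whose moved vertices are images under the hyperbolic translation $T$ of length $m$ along $l$, express $f$ as the logarithm of a cross-ratio, differentiate), and your symmetry reduction swapping $(a,b)\leftrightarrow(c,d)$ and hence $x\leftrightarrow y$ is a correct and genuinely useful observation that the paper does not exploit (it would cut the eight computations down to four). But you then state explicitly that the sign verification ``is the main obstacle'' and that the claim ``reduces to a nontrivial polynomial inequality in $\lambda$ and the geometric parameters,'' offering only a heuristic (extract a factor of $\lambda-1$, compare two of the terms) for how one might resolve it. Since the monotonicity assertion \emph{is} that inequality, the proof stops exactly where the content begins. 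Moreover, only one of the four sub-cases ($x\in[d,a]$, $y\in[b,c]$) is even set up; the others are waved at with ``analogous expressions hold.''

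The difficulty you ran into is partly self-inflicted by the coordinate $\sigma(z)=(z-x)/(z-y)$: because $\sigma$ depends on the variable of differentiation, all four of $p,q,r,s$ vary with $x$ and the derivative is an eight-term sum with genuinely mixed signs. The paper instead normalizes by a Möbius map depending only on the \emph{fixed} data (sending $y\mapsto\infty$ and one vertex of the box to $0$), so that $T(z)=e^{\pm m}(z-x)+x$ is affine, only the moved vertices carry $x$-dependence, and $\partial f/\partial x$ collapses after pairing terms over a common numerator $e^m(e^m-1)(b-a)$ into a difference of two positive fractions whose denominators are visibly ordered --- the sign is then immediate. I would recommend redoing the computation in that normalization (keeping your symmetry reduction to halve the case count); as written, the argument establishes the formula for $f$ but not the monotonicity, which is the statement to be proved.
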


\begin{proof}
Assume $x\in [d,a]$ and $y\in [b,c]$ (cf. Figure 4). Normalize such that $c<d=0\leq x\leq a<b<y=\infty$ and $a>0$. Let $T(z)=e^m(z-x)+x$ be a hyperbolic translation with repelling fixed point $x$, attracting fixed point $y=\infty$ and translation length $m>0$. Then, by definition of earthquake $E$,
$$
f(x,y)=L ([T(a),T(b)]\times [c,d]).
$$
Further, we have
$$
f(x,y)=\log\frac{[e^m(a-x)+x-c][e^m(b-x)+x]}{[e^m(a-x)+x][e^m(b-x)+x-c]}
$$
and
$$
\frac{\partial}{\partial x}f(x,y)=\frac{1-e^m}{e^m(a-x)+x-c}+\frac{1-e^m}{e^m(b-x)+x} +\frac{e^m-1}{e^m(a-x)+x}+\frac{e^m-1}{e^m(b-x)+x-c}.
$$ 
By simplifying the right side of the above equation, we get
$$
\frac{\partial}{\partial x}f(x,y)=\frac{(b-a)e^m(e^m-1)}{[e^m(b-x)+x][e^m(a-x)+x]} -\frac{(b-a)e^m(e^m-1)}{[e^m(b-x)+x-c][e^m(a-x)+x-c]}>0
$$
and $f(x,y)$ is increasing in  $x\in [d,a]$ for a fixed $y\in [b,c]$.

\begin{figure}
\noindent\makebox[\textwidth]{
\includegraphics[width=10cm]{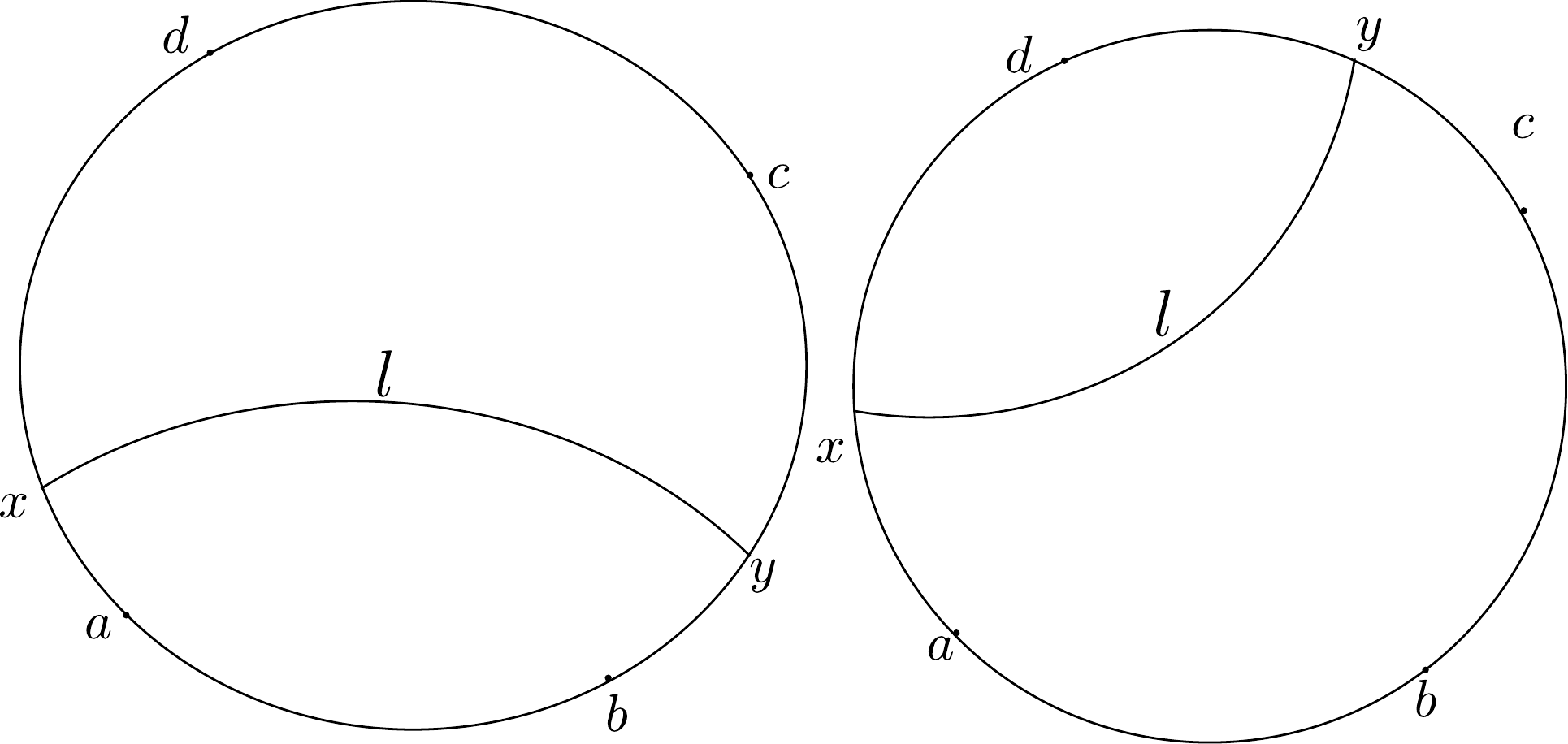}}
\caption{Estimating the Liouville measure $f(x,y)$ under a simple earthquake: $x\in [d,a]$; $y\in [b,c]$ or $y\in [c,d]$.}
\end{figure}

\vskip .2 cm

Assume $x\in [d,a]$ and $y\in [c,d]$ (cf. Figure 4). Normalize such that $d=0\leq x\leq a<b<c<y=\infty$ and $a>0$. Let $T(z)=e^{-m}(z-x)+x$. By definition of earthquake $E$, we have
$$
f(x,y)=L([a,b]\times [c, T(d)])
$$
which gives
$$
f(x,y)=\log\frac{(c-a)[b-(1-e^{-m})x]}{(c-b)[a-(1-e^{-m})x]}.
$$
Then
\begin{equation*}
\begin{split}
\frac{\partial}{\partial x}f(x,y)=\frac{-(1-e^{-m})}{b-(1-e^{-m})x}+ \frac{1-e^{-m}}{a-(1-e^{-m})x}\\ =\frac{(b-a)(1-e^{-m})}{[b-(1-e^{-m})x][a-(1-e^{-m})x]}>0
\end{split}
\end{equation*}
and $f(x,y)$ is increasing in  $x\in [d,a]$ for a fixed $y\in [c,d]$.

\vskip .2 cm

Assume $x\in [a,b]$ and $y\in [b,c]$ (cf. Figure 5). Normalize such that $c<d<a=0\leq x\leq b<y=\infty$ and $a<b$. Let $T(z)=e^{m}(z-x)+x$. By definition of earthquake $E$, we have
$$
f(x,y)=L([a,T(b)]\times [c, d])
$$
which gives
$$
f(x,y)=\log\frac{(-c)[e^{m}(b-x)+x-d]}{(-d)[e^{m}(b-x)+x-c]}.
$$
We have
\begin{equation*}
\begin{split}
\frac{\partial}{\partial x}f(x,y)=\frac{-(e^m-1)}{e^m(b-x)+x-d} +\frac{e^m-1}{e^m(b-x)+x-c}\\ = \frac{(e^m-1)(c-d)}{[e^m(b-x)+x-d][e^m(b-x)+x-c]}<0.
\end{split}
\end{equation*}

\vskip .2 cm

\begin{figure}
\noindent\makebox[\textwidth]{
\includegraphics[width=10cm]{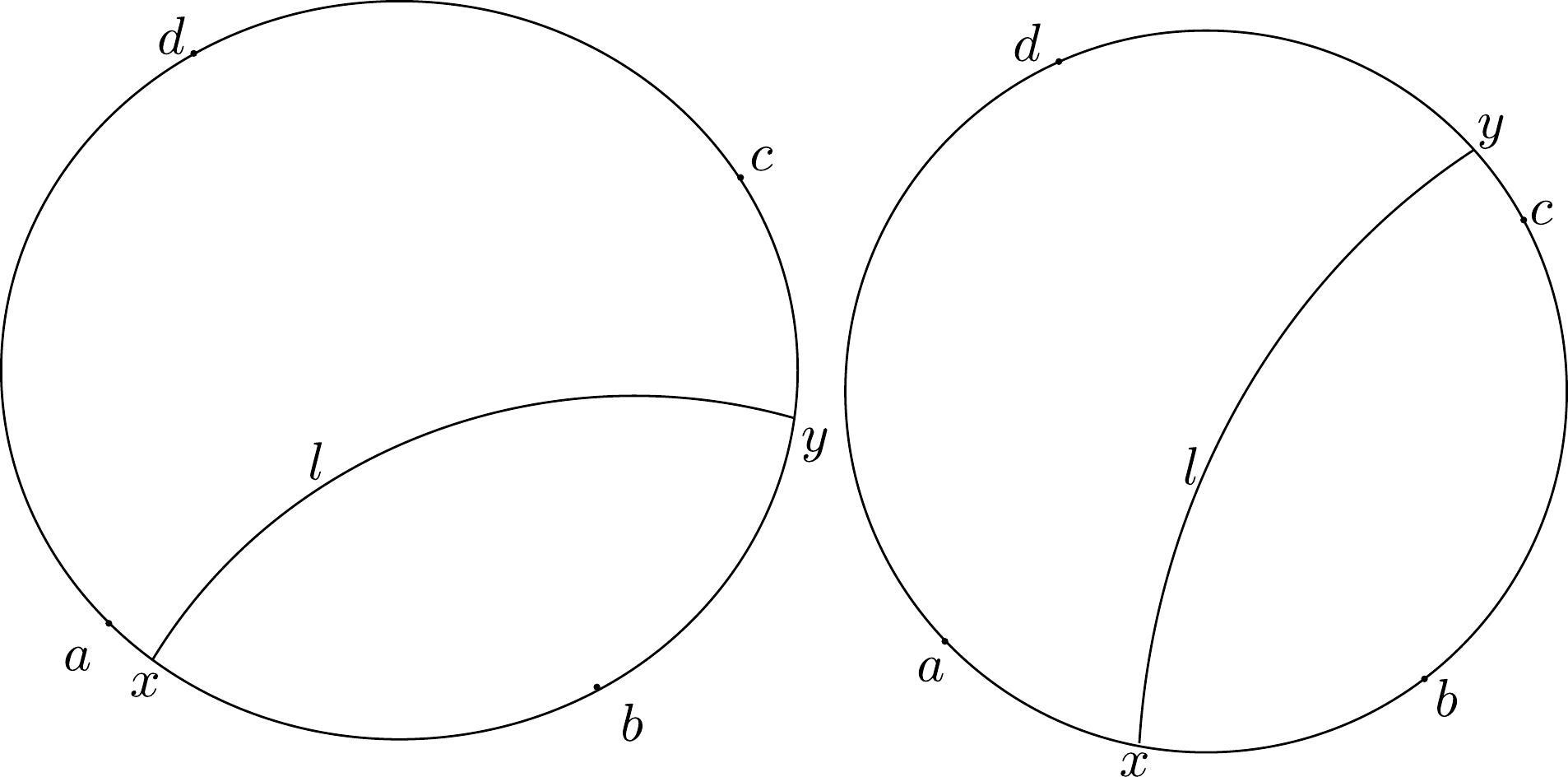}}
\caption{Estimating the Liouville measure $f(x,y)$ under a simple earthquake: $x\in [a,b]$; $y\in [b,c]$ or $y\in [c,d]$.}
\end{figure}

Assume $x\in [a,b]$ and $y\in [c,d]$ (cf. Figure 5). Normalize such that $d<a=0\leq x\leq b<c<y=\infty$ and $0<b$. Let $T(z)=e^{m}(z-x)+x$. By definition of earthquake $E$, we have
$$
f(x,y)=L([a,T(b)]\times [T(c), d])
$$
which gives
$$
f(x,y)=\log\frac{[e^m(c-x)+x][e^{m}(b-x)+x-d]}{(-d)[e^{m}(c-b)]}.
$$
We have
\begin{equation*}
\frac{\partial}{\partial x}f(x,y)=\frac{-(e^m-1)}{e^m(c-x)+x} +\frac{-(e^m-1)}{e^m(b-x)+x-d}<0.
\end{equation*}

\vskip .2 cm

Assume $y\in [b,c]$ and $x\in [d,a]$ (cf Figure 4). Normalize such that $a<b=0\leq y\leq c<d<x=\infty$ and $b<c$. Let $T(z)=e^{m}(z-y)+y$. By definition of earthquake $E$, we have
$$
f(x,y)=L([a,b]\times [T(c), T(d)])
$$
which gives
$$
f(x,y)=\log\frac{[e^m(c-y)+y-a][e^{m}(d-y)+y]}{[e^m(d-y)+y-a][e^{m}(c-y)+y]}.
$$
We have
\begin{equation*}
\begin{split}
\frac{\partial}{\partial y}f(x,y)=\frac{-(e^m-1)}{e^m(c-y)+y-a} +\frac{-(e^m-1)}{e^m(d-y)+y}+ \frac{e^m-1}{e^m(d-y)+y-a} + \frac{e^m-1}{e^m(c-y)+y} \\
=\frac{(-a)(e^m-1)}{[e^m(c-y)+y-a][e^m(c-y)+y]} +\frac{(e^m-1)a}{[e^m(d-y)+y][e^m(d-y)+y-a]}
>0.
\end{split}
\end{equation*}

\vskip .2 cm

Assume $y\in [b,c]$ and $x\in [a,b]$ (cf. Figure 5). Normalize such that $b=0\leq y\leq c<d<a<x=\infty$ and $0<c$. Let $T(z)=e^{-m}(z-y)+y$. By definition of earthquake $E$, we have
$$
f(x,y)=L([a,T(b)]\times [c, d])
$$
which gives
$$
f(x,y)=\log\frac{(a-c)[d-(1-e^{-m})y][e^{m}(d-y)+y]}{(a-d)[c-(1-e^{-m})y]}.
$$
We have
\begin{equation*}
\begin{split}
\frac{\partial}{\partial y}f(x,y)=\frac{-(1-e^{-m})}{d-(1-e^{-m})y} +\frac{1-e^{-m}}{c-(1-e^{-m})y}\\
=\frac{(d-c)(1-e^{-m})}{[d-(1-e^{-m})y][c-(1-e^{-m})y]} >0.
\end{split}
\end{equation*}

\vskip .2 cm

Assume $y\in [c,d]$ and $x\in [d,a]$ (cf. Figure 4). Normalize such that $a=0<b<c\leq y\leq d<x=\infty$ and $c<d$. Let $T(z)=e^{m}(z-y)+y$. By definition of earthquake $E$, we have
$$
f(x,y)=L([a,b]\times [c, T(d)])
$$
which gives
$$
f(x,y)=\log\frac{c[e^m(d-y)+y-b]}{(c-b)[e^m(d-y)+y]}.
$$
We have
\begin{equation*}
\begin{split}
\frac{\partial}{\partial y}f(x,y)=\frac{1-e^{m}}{e^m(d-y)+y-b} +\frac{e^{m}-1}{e^m(d-y)+y}<0.
\end{split}
\end{equation*}

\vskip .2 cm

Assume $y\in [c,d]$ and $x\in [a,b]$ (cf. Figure 5). Normalize such that $b=0<c\leq y\leq d<a<x=\infty$ and $c<d$. Let $T(z)=e^{-m}(z-y)+y$. By definition of earthquake $E$, we have
$$
f(x,y)=L([a,T(b)]\times [T(c), d])
$$
which gives
$$
f(x,y)=\log\frac{[a-e^{-m}c-(1-e^{-m})y][d-(1-e^{-m})y]}{(a-d)(e^{-m}c)}.
$$
We have
\begin{equation*}
\begin{split}
\frac{\partial}{\partial y}f(x,y)=\frac{-(1-e^{-m})}{a-e^{-m}c-(1-e^{-m})y} +\frac{-(1-e^{-m})}{d-(1-e^{-m})y}<0.
\end{split}
\end{equation*}

\end{proof}

We prove a proposition extending the above lemma to earthquakes with arbitrary support.

\begin{proposition}
\label{prop:min-max-earthquake}
Let $[a_1,b_1]\subseteq [a,b]$ and $[c_1,d_1]\subseteq [c,d]$ be two nested intervals on $S^1$ with $(a,b)\cap (c,d)=\emptyset$. 

Let $E^{\beta}$ be an earthquake with earthquake measure $\beta$ supported on $[a_1,b_1]\times [c_1,d_1]$. Then
$$
L([a,T_{2}(b)]\times [T_{2}(c),d])\leq L(E^{\beta}([a,b]\times [c,d])),
$$
where $T_2$ is a hyperbolic translation with repelling fixed point $b_1$ and attracting fixed point $d_1$ and translation length $m=\beta ([a_1,b_1]\times [c_1,d_1])$.

Let $E^{\gamma}$ be an earthquake with earthquake measure $\gamma$ supported on $[a,b]\times [c,d]$. Then
$$
L(E^{\gamma}([a_1,b_1]\times [c_1,d_1]))\leq L([a_1, T_1(b_1)]\times [c_1,d_1]),
$$
where $T_1$ is a hyperbolic translation with repelling fixed point $a_1$ and attracting fixed point $c_1$ and translation length $m=\beta ([a,b]\times [c,d])$.
\end{proposition}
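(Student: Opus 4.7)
The plan is to reduce both inequalities to Lemma~\ref{lem:simple_earthq_estimate} by approximation and induction. First I will approximate the measured lamination $\beta$ (respectively $\gamma$) weakly by discrete laminations $\beta^{(n)}=\sum_{k=1}^{N_n}m_{n,k}\delta_{g_{n,k}}$ whose supports lie in $[a_1,b_1]\times[c_1,d_1]$ (respectively $[a,b]\times[c,d]$), with total masses converging to $m$. Since $E^{\beta^{(n)}}\to E^{\beta}$ pointwise on $S^1$ and the Liouville measure of a box depends continuously on its four corners, both sides of the desired inequalities pass to the limit; so it suffices to prove them for discrete laminations.

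For a discrete lamination I will factor $E^{\beta^{(n)}}=E_{N_n}\circ\cdots\circ E_1$ as a composition of simple earthquakes (normalized consistently on the stratum containing the arc $[d,a]$) and argue by induction on $N_n$. The base case $N_n=1$ is exactly the setting of Lemma~\ref{lem:simple_earthq_estimate}: for the lower bound the monotonicity there shows that $f(x,y)=L(E_{(x,y),m}(Q))$ is decreasing in both $x\in[a_1,b_1]\subset[a,b]$ and $y\in[c_1,d_1]\subset[c,d]$, giving $f(x,y)\geq f(b_1,d_1)=L([a,T_2(b)]\times[T_2(c),d])$; for the upper bound, a symmetric application of Lemma~\ref{lem:simple_earthq_estimate} to the box $Q_1$, together with a brief case analysis according to whether the geodesic $(x,y)$ separates the corners of $Q_1$ (and noting that when it does not, the Liouville measure of $Q_1$ is unchanged but is still bounded by the right-hand side since $T_1(b_1)\in(b_1,c_1)$ makes that box strictly larger), yields $f_1(x,y)\leq f_1(a_1,c_1)$.

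For the inductive step I will replace the atoms one at a time with the extremal geodesic $(b_1,d_1)$ (respectively $(a_1,c_1)$) and apply the monotonicity at each replacement. The key observation is that simple earthquakes along a common axis commute and compose to a single hyperbolic translation whose length is the sum of the individual lengths, so after all replacements the composite earthquake becomes exactly $T_2$ (respectively $T_1$) of total length $m$. The main technical obstacle is ensuring that after each partial replacement the intermediate box still satisfies the nesting hypotheses needed for Lemma~\ref{lem:simple_earthq_estimate} to apply with the correct sign; this is verified by checking that each simple earthquake supported in $[a_1,b_1]\times[c_1,d_1]$ (with the chosen normalization) fixes $a$ and $d$ while moving $b$ and $c$ strictly into the arc $[b_1,c_1]$, so the intermediate boxes form a nested family inside the original $[a,b]\times[c,d]$ along which the required monotonicity signs continue to hold.
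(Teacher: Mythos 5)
Your proposal is correct and follows essentially the same route as the paper: approximate $\beta$ (resp.\ $\gamma$) by finite earthquakes supported in the given box with total mass converging to $m$ (the Thurston/Gardiner--Hu--Lakic approximation), pass to the limit using continuity of the Liouville measure of a box in its corners, and reduce the finite case to the monotonicity of Lemma~\ref{lem:simple_earthq_estimate}. In fact the paper is terser than you are — it simply says the lemma is ``applied to the support of the finite earthquake'' and leaves the second inequality to the reader — so your explicit atom-by-atom replacement toward the extremal geodesic, with the attendant bookkeeping of intermediate boxes, is a legitimate filling-in of the step the paper leaves implicit.
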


\begin{proof}
An earthquake $E^{\beta}$ can be approximated by a finite earthquake $E^{\beta_n}$ with support geodesics 
$\{ l_1,l_2,\ldots ,l_{k_n}\}$ in $[a_1,b_1]\times [c_1,d_1]$ and the weights $m_i=\beta_n(l_i)$ for $i=1,2,\ldots ,k_n$ that satisfies (cf. Thurston \cite{Th}, and Gardiner, Hu and Lakic \cite{GHL})
$$
\Big{|} \beta ([a_1,b_1]\times [c_1,d_1])-\sum_{i=1}^{k_n} m_i\Big{|} <\frac{1}{n}
$$
and
$$
\Big{|}E^{\beta}(z)-E^{\beta_n}(z)\Big{|}<\frac{1}{n}
$$
for all $z\in S^1$. 

The above inequality implies that 
$$
L(E^{\beta_n}([a,b]\times [c,d]))\to L(E^{\beta}([a,b]\times [c,d]))
$$
as $n\to\infty$. On the other hand, by applying Lemma \ref{lem:simple_earthq_estimate} to the support of the finite earthquake $E^{\beta_n}$ we get
$$
L([a,T_{2}^n(b)]\times [T_{2}^n(c),d])\leq L(E^{\beta_n}([a,b]\times [c,d])),
$$
where $T_2^n$ is a hyperbolic translation with repelling fixed point $b_1$,  attracting fixed point $d_1$ and translation length $m_1+m_2+\cdots +m_{k_n}$. The first inequality is established by taking $n\to\infty$.

The proof of the second inequality is done in a similar fashion to the above. We leave it to the interested reader.
\end{proof}

In the following lemma we establish the estimate for Liouville measure of a box of geodesics $Q=[a,b]\times [c,d]$ under simple earthquakes whose support geodesic has endpoints $a$ and $c$. This is the case of the largest increase in Liouville measure as established in the previous lemma.

\begin{lemma}
\label{lem:estimate_L_box_Epath}
Let $Q=[a,b]\times [c,d]$ be a box of geodesics and let $D=dist(l(a,d),l(b,c))$ be the distance between the geodesic $l(a,d)$ with endpoints $a,d$ and the geodesic $l(b,c)$ with endpoints $b,c$. Let $E$ be a simple earthquake with the support $g=l(a,c)$ and measure $m> 0$. Then
$$
m+\log\frac{D^2}{4}\leq L(E([a,b]\times [c,d]))\leq m+L([a,b]\times [c,d]).
$$
\end{lemma}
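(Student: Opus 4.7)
The plan is to normalize the configuration via a Möbius transformation and then reduce both bounds to elementary scalar inequalities. Since the Liouville measures, the distance $D$, and the translation length $m$ are all Möbius-invariant, I would first send $a\mapsto 0$ and $c\mapsto\infty$ in the upper half-plane model, which makes the support geodesic $g=l(a,c)$ the positive imaginary axis; consistent with the cyclic order $a,b,c,d$, one then has $b>0$ and $d<0$.

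Modulo post-composition by an isometry, the simple earthquake $E$ is the identity on the half-plane $\{\operatorname{Re}z<0\}$ (containing $d$) and the hyperbolic translation $T(z)=e^{m}z$ of length $m$ along $g$ on $\{\operatorname{Re}z>0\}$ (containing $b$). Its continuous extension to $S^{1}$ therefore fixes $a,c,d$ and sends $b$ to $e^{m}b$, so
$$E([a,b]\times[c,d])=[0,e^{m}b]\times[\infty,d],$$
and a direct cross-ratio computation produces the closed forms
$$L(Q)=\log\!\left(1+\tfrac{b}{|d|}\right),\qquad L(E(Q))=\log\!\left(1+\tfrac{e^{m}b}{|d|}\right).$$

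The upper bound is then immediate: the algebraic inequality $1+\tfrac{e^{m}b}{|d|}\leq e^{m}\!\left(1+\tfrac{b}{|d|}\right)$ is equivalent to $1\leq e^{m}$, and taking logarithms gives $L(E(Q))\leq m+L(Q)$.

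For the lower bound, I would rewrite the desired inequality $L(E(Q))\geq m+\log(D^{2}/4)$ as $e^{-m}+\tfrac{b}{|d|}\geq D^{2}/4$. To express the ratio $b/|d|$ in terms of $D$, I would apply the standard cross-ratio formula for the hyperbolic distance between two disjoint geodesics to $l(a,d)=l(0,d)$ and $l(b,c)=l(b,\infty)$; computing $\tanh^{2}(D/2)$ from the cross-ratio and using $\cosh^{2}-\sinh^{2}=1$ yields the identity $\sinh^{2}(D/2)=b/|d|$. The elementary inequality $\sinh(x)\geq x$ for $x\geq 0$ then gives $b/|d|=\sinh^{2}(D/2)\geq D^{2}/4$, so $e^{-m}+\tfrac{b}{|d|}\geq \tfrac{b}{|d|}\geq D^{2}/4$ as required. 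The only delicate point is the correct book-keeping with the cross-ratio at infinity to establish $\sinh^{2}(D/2)=b/|d|$; once that identity is in hand, the bound $\sinh(x)\geq x$ closes the estimate cleanly and the term $e^{-m}$ is not even needed.
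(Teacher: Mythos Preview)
Your proposal is correct and follows essentially the same approach as the paper: normalize in the upper half-plane so that $a=0$, $c=\infty$, compute $L(E(Q))=\log(1+e^{m}b/|d|)$ explicitly, and then read off both inequalities. The paper in fact fixes $d=-1$ and simply asserts that the bounds ``easily'' follow from the closed form $\log(e^{m}b+1)$; your version supplies the details the paper omits, in particular the identity $\sinh^{2}(D/2)=b/|d|$ and the use of $\sinh x\geq x$ for the lower bound.
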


\begin{proof}
Normalize $E$ to be the identity on the half-plane complement of $g$ which contains $d$. We use the upper half-plane model $\mathbb{H}$ and assume that $a=0$, $b>0$, $c=\infty$ and $d=-1$. A direct computation yields
$$
L(E([a,b]\times [c,d]))=\log (e^mb+1)
$$
which easily give estimate in the statement of the lemma.
\end{proof}

\end{document}